\newtheorem{thm}{Theorem}[section]
\newtheorem{cor}[thm]{Corollary}
\newtheorem{lem}[thm]{Lemma}
\newtheorem{prop}[thm]{Proposition}
\theoremstyle{definition}
\newtheorem{defn}[thm]{Definition}
\theoremstyle{remark}
\newtheorem{rem}[thm]{Remark}
\numberwithin{equation}{section}
\newcommand{\set}[1]{\left\{#1\right\}}
\newcommand{\Real}{\mathbb R}
\newcommand{\func}[1]{\ensuremath{\mathrm{#1} \:} }
\newcommand{\dist}[0]{\mathrm{dist}}
\newcommand{\inj}[0]{\func{inj}}
\newcommand{\diam}[0]{\func{diam}}
\title{Compactness Properties of the space of genus-$g$ helicoids}
\author{Jacob Bernstein and Christine Breiner}
\address{Dept. of Math, Massachusetts Institute of
Technology, Cambridge, MA  02139, USA}
\email{jbern@math.mit.edu}
\address{Dept. of Math,
Johns Hopkins University, Baltimore, MD 21218, USA}
\email{cbreiner@math.jhu.edu}
\thanks{The first author was supported in part by NSF grant DMS 0606629}
\begin{document}
\begin{abstract}
In \cite{CM5}, Colding and Minicozzi describe a type of
compactness property possessed by sequences of embedded minimal
surfaces in $\Real^3$ with finite genus and with boundaries going to
$\infty$.  They show that any such sequence either contains a
sub-sequence with uniformly bounded curvature or the sub-sequence
has certain prescribed singular behavior.  In this paper, we
sharpen their description of the singular behavior when the
surfaces have connected boundary.  Using this, we deduce certain
additional compactness properties of the space of genus-$g$
helicoids.
\end{abstract}
\maketitle
\section{Introduction}
The goal of this paper is to better understand the finer geometric
structure of elements of $\mathcal{E}(1,g)$, the space of
genus-$g$ helicoids.   Here $\mathcal{E}(e,g,R)$ denotes the set
of smooth, connected, properly embedded minimal surfaces,
$\Sigma\subset\Real^3$, so that $\Sigma$ has genus $g$ and
$\partial \Sigma \subset
\partial B_R(0)$ is smooth, compact and has $e$ components. Every
element of $\mathcal{E}(1,g)=\mathcal{E}(1,g,\infty)$ is asymptotic
to a helicoid (see \cite{BB2}) and hence the terminology ``genus-$g$
helicoid'' is warranted. We approach this problem by showing certain
compactness properties for $\mathcal{E}(1,g)$, which ultimately
bound the geometry of elements of $\mathcal{E}(1,g)$.  In
\cite{BBg1Cpct}, it is shown that the space $\mathcal{E}(1,1)$,
modulo symmetries, is compact. When the genus is greater than one,
we cannot deduce such a nice result as we cannot rule out the
``loss'' of genus. Nevertheless, we will show that after a suitable
normalization, for any $g$, $\cup_{l=1}^g \mathcal{E}(1,l)$ is
compact. Indeed, we prove a slight generalization:
\begin{thm} \label{g2CpctnessThmInt}
Suppose $\Sigma_i \in \mathcal{E}(1,g,R_i)$ ($g\geq 1$) with $0\in
\Sigma_i$, $\inj_{\Sigma_i} (0)\leq \Delta$,
$\inf\set{\inj_{\Sigma_i}(q): q\in \mathcal{B}_\Delta(0)} \geq
\epsilon>0$, and $R_i/r_+(\Sigma_i)\to \infty$.  Then a sub-sequence
of the $\Sigma_i$ converges uniformly in $C^\infty$ on compact
subsets of $\Real^3$ with multiplicity one to a surface
$\Sigma_\infty\in \cup_{l=1}^g \mathcal{E}(1,l)$.
\end{thm}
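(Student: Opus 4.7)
The plan is to invoke the Colding--Minicozzi compactness theorem of \cite{CM5}, use the injectivity radius hypotheses to rule out its singular alternative, and then run standard elliptic compactness for embedded minimal surfaces.

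First, I would apply the dichotomy of \cite{CM5} to $\Sigma_i$: since $\partial \Sigma_i \subset \partial B_{R_i}(0)$ with $R_i \to \infty$, a subsequence either has uniformly bounded second fundamental form on compact subsets of $\Real^3$, or it exhibits the prescribed singular behavior, i.e.\ collapse of multi-valued graphs along a one-dimensional singular set, with smooth convergence off the set to a limit lamination. The intrinsic lower bound $\inj_{\Sigma_i}(q) \geq \epsilon$ on $\mathcal{B}_\Delta(0)$ is incompatible with the singular case occurring in $\mathcal{B}_\Delta(0)$, since a collapse of sheets there would force intrinsic geodesic loops to shrink to zero. Using the sharpening of the singular set description established in the earlier part of the paper for surfaces with connected boundary, the absence of singularities near $0$ should propagate to give uniformly bounded curvature on all compact subsets of $\Real^3$.

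With uniform curvature bounds in hand, a standard Arzel\`a--Ascoli-type argument for embedded minimal surfaces produces a subsequential $C^\infty$ limit $\Sigma_\infty$ on compact subsets of $\Real^3$ that is smooth and properly embedded. Multiplicity one follows from the injectivity radius bound, which prevents sheets from folding together. Lower semicontinuity of genus under smooth, multiplicity-one convergence then gives that the genus of $\Sigma_\infty$ is at most $g$. The scale condition $R_i / r_+(\Sigma_i) \to \infty$ lets the boundaries of the $\Sigma_i$ retreat to infinity relative to the natural scale, so $\Sigma_\infty$ is complete; combined with connectedness and the asymptotic helicoidal description from \cite{BB2}, this should identify $\Sigma_\infty$ as an element of $\mathcal{E}(1,l,\infty)$ for some $l$.

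The main obstacle is ensuring that $l \geq 1$, i.e.\ that all of the genus does not escape to infinity. This is where the upper bound $\inj_{\Sigma_i}(0) \leq \Delta$ is essential: under uniform curvature bounds, a simply connected piece of a minimal surface has injectivity radius controlled from below by its extrinsic size, so a bounded intrinsic injectivity radius at $0$ forces a topologically nontrivial loop based near $0$ of uniformly bounded length. Passing to the smooth limit and certifying that this loop remains nontrivial in $\Sigma_\infty$ then yields positive genus. Rigorously tracking this topological feature through the convergence---so that genus is neither created nor destroyed near the origin---is the main technical step, and will likely rest on the refined singular-set analysis that the paper develops in order to upgrade the raw output of \cite{CM5}.
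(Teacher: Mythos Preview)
Your proposal has a genuine gap at the step where you claim that ``the absence of singularities near $0$ should propagate to give uniformly bounded curvature on all compact subsets of $\Real^3$.'' The lower bound $\inj_{\Sigma_i}\geq\epsilon$ holds only on the \emph{intrinsic} ball $\mathcal{B}_\Delta(0)$, and nothing in the Colding--Minicozzi theory forces the singular line $\mathcal{S}$ to meet that ball; a priori the curvature could blow up at points that are extrinsically (or intrinsically) far from $0$ while remaining bounded on $\mathcal{B}_\Delta(0)$. You gesture at ``the sharpening of the singular set description established in the earlier part of the paper,'' but do not say how it would be applied, and in fact the paper does not use Theorem~\ref{GenusLossThm} to rule out the singular alternative outright.

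The paper's proof is organized quite differently: it is an induction on $g$, and it does \emph{not} attempt to obtain uniform curvature bounds directly in every case. After passing to a subsequence it splits according to the behaviour of $r_+(\Sigma_i)$ and $r_-(\Sigma_i)$. If both are bounded above and below, Theorem~\ref{CpctnessCorg1} applies directly. If $r_+(\Sigma_i)$ stays bounded but $r_-(\Sigma_i)\to 0$, one rescales so that $r_+=1$, applies Theorem~\ref{GenusLossThm} and Proposition~\ref{GenusLossCor} to produce uniformly collapsing $k$-handles, and then the injectivity-radius hypotheses give a contradiction (some point of $\mathcal{B}_\Delta(0)$ must sit intrinsically close to a collapsing handle, violating the lower bound $\epsilon$ after rescaling). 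The essential case is $r_+(\Sigma_i)\to\infty$: here one rescales so $r_+=1$, whereupon $\inj(0)\to 0$ and the singular alternative \emph{does} occur. Theorem~\ref{GenusLossThm} then places $0\in\mathcal{S}_{genus}$ and supplies a $\delta>0$ so that the component of $B_\delta(0)\cap\tilde\Sigma_i$ through $0$ lies in $\mathcal{E}(1,g',\delta)$ with $g'<g$; undoing the rescaling, this component $\Sigma_i'\subset\Sigma_i$ satisfies the hypotheses of the theorem with strictly smaller genus, the inductive hypothesis furnishes the limit $\Sigma_\infty'$, and one checks that $\Sigma_i\cap B_R=\Sigma_i'\cap B_R$ for large $i$. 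Thus Theorem~\ref{GenusLossThm} is used not to exclude singular behaviour but to organize it so that the genus drops and induction can proceed---a mechanism your outline does not supply.
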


We define $r_+(\Sigma)$ in Section \ref{TopDefSec}, noting now
only that it roughly measures the smallest extrinsic scale that
contains all of the genus. The normalization requires only that
the topology neither concentrates, nor disappears, near $0$. In
order to arrive at this result, we refine the powerful lamination
theory given by Colding and Minicozzi in \cite{CM5}. In its
simplest form -- i.e. Theorem 0.1 of \cite{CM4} -- the lamination
theorem states that a sequence of embedded minimal disks, with
boundaries going to $\infty$ and without uniformly bounded
curvature, must contain a sub-sequence converging to a foliation
of $\Real^3$ by parallel planes. Moreover, the convergence is in a
manner analogous to the homothetic blow-down of a helicoid.
Theorem 0.9 of \cite{CM5} generalizes this for sequences of
surfaces with more general topologies -- requiring only that the
surfaces are uniformly ``disk-like'' on small scales.
As Colding and Minicozzi's paper is somewhat involved,
we refer the reader to Appendix A of \cite{BBg1Cpct} which
provides a summary of the relevant definitions and results.  While
we make use of this lamination theory extensively, it is not
sufficiently precise for our purposes.  Thus, we prove the
following sharpening, when the boundaries are connected, which
describes in more detail the fate of the topology in the limit:

\begin{thm}\label{GenusLossThm}
Suppose $\Sigma_i \in \mathcal{E}(1,g,R_i)$ ($g\geq 1$), $R_i\to
\infty$, $r_+(\Sigma_i)=1$, the genus of each $\Sigma_i$ is
centered at $0$, and $\sup_{B_1(0)\cap \Sigma_i} |A|^2 \to
\infty$. Then, up to passing to a sub-sequence and rotating
$\Real^3$, the following holds:
\begin{enumerate}
 \item The $\Sigma_i$ converge to the lamination $\mathcal{L}=\set{x_3=t}_{t\in
\Real}$ with singular set $\mathcal{S}$ the $x_3$-axis in the sense of Theorem 0.9 of \cite{CM5}.
\item There is a number $2\leq l\leq g$ and a set of $l$ distinct points
$\mathcal{S}_{genus}=\set{p_1,\ldots, p_{l}}\subset \{(0,0,t)| -1
\leq t \leq 1\}$, with $p_1=(0,0,-1)$ and $p_{l}=(0,0,1)$, radii
$r_1, \ldots r_l>0$ and sequences $r_1^i, \ldots r_l^i\to 0$ so that
the genus of $B_{r_j}(p_j)\cap \Sigma_i$, $g_j$, is equal to the
genus of $B_{r_j^i}(p_j)\cap \Sigma_i$ and $g_1+\ldots +g_{l}=g$.
\item Each component of $B_{r_j}(p_j)\cap \Sigma_i$ and of
$B_{r_j^i}(p_j)\cap \Sigma_i$ has connected boundary.
\item If $B_{\rho}(y)\cap \cup_j B_{r_j^i}(p_j)=\emptyset$, then each component of $B_{\rho}(y)\cap \Sigma_i$ is a disk.
\end{enumerate}
\end{thm}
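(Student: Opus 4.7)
The plan is to combine the Colding--Minicozzi lamination theorem with a careful bookkeeping of where the genus of the $\Sigma_i$ accumulates, using the connectedness of $\partial \Sigma_i$ in an essential way for item (3). First I would apply Theorem 0.9 of \cite{CM5} to extract a subsequence converging in the lamination sense to some $\mathcal{L}$ with singular set $\mathcal{S}$. The curvature blow-up hypothesis forces $\mathcal{S} \cap \overline{B_1(0)} \neq \emptyset$; because each $\Sigma_i$ is globally asymptotic to a helicoid and has connected boundary, one argues as in Appendix A of \cite{BBg1Cpct} that, after rotation, $\mathcal{L}$ is the foliation $\{x_3 = t\}_{t \in \Real}$ and $\mathcal{S}$ is the $x_3$-axis. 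This establishes item (1).

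Next, I would define the genus-concentration set $\mathcal{S}_{\mathrm{genus}}$ as the set of $p \in \mathcal{S}$ with $\liminf_i \mathrm{genus}(B_r(p) \cap \Sigma_i) \geq 1$ for every $r > 0$. Finite total genus forces $\mathcal{S}_{\mathrm{genus}}$ to be finite, of cardinality $l \leq g$. Since the genus is centered at $0$ and $r_+(\Sigma_i) = 1$, the endpoints $(0,0,\pm 1)$ must both lie in $\mathcal{S}_{\mathrm{genus}}$; if one were absent, a slight shortening of the segment would still trap all the genus for large $i$, contradicting the definition of $r_+$. Now choose $r_j > 0$ small so that the balls $B_{r_j}(p_j)$ are pairwise disjoint and contain no other concentration point. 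Then $\mathrm{genus}(B_{r_j}(p_j) \cap \Sigma_i)$ stabilizes to some $g_j \geq 1$ with $g_1 + \cdots + g_l = g$ (since no genus concentrates outside $\mathcal{S}_{\mathrm{genus}}$ in $\overline{B_1(0)}$), and $r_j^i \to 0$ may be chosen by a diagonal argument so that $\mathrm{genus}(B_{r_j^i}(p_j) \cap \Sigma_i) = g_j$. This yields item (2).

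For item (3), choose $r_j$ and $r_j^i$ as regular values of $|x - p_j|$ restricted to $\Sigma_i$, so the boundaries in question are smooth $1$-manifolds. Off $\mathcal{S}$, convergence to $\mathcal{L}$ is smooth with multiplicity one, so on $\partial B_{r_j}(p_j)$ each sheet of $\Sigma_i$ is a graph over a disk in a leaf. Suppose for contradiction that some component $U$ of $B_{r_j}(p_j) \cap \Sigma_i$ had distinct boundary circles $\gamma_1, \gamma_2$. Since $\partial \Sigma_i$ is a single circle on $\partial B_{R_i}(0)$, tracing $\gamma_1$ and $\gamma_2$ through $\Sigma_i \setminus U^\circ$ produces an annular subregion joining them through the exterior of $B_{r_j}(p_j)$, which together with $U$ yields a closed embedded surface in $\Real^3$. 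Analyzing the region it bounds using the helicoid-like structure outside $B_1(0)$ then contradicts embeddedness or the uniqueness of the end of $\Sigma_i$. The same regular-value and global topology argument handles the inner scale $r_j^i$. I expect this to be the main obstacle, as it is the genuine sharpening over \cite{CM5}: it uses the connectedness of $\partial \Sigma_i$ globally, not just the local lamination structure.

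Finally, for item (4), suppose $B_\rho(y)$ is disjoint from every $B_{r_j^i}(p_j)$. If $B_\rho(y)$ avoids the $x_3$-axis, smooth multiplicity-one convergence forces each component of $B_\rho(y) \cap \Sigma_i$ to be a small graphical disk for large $i$. If $B_\rho(y)$ meets the axis but no concentration point, then by construction no genus is captured in $B_\rho(y)$ for large $i$; combined with connected component boundaries (applied at a regular value $\rho' \leq \rho$ as in item (3)), each component is a planar surface with one boundary circle, hence a disk.
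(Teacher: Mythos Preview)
Your outline misses the central difficulty of the theorem: ruling out \emph{non-collapsing handles}. In item (2) you write that $g_1+\cdots+g_l=g$ follows ``since no genus concentrates outside $\mathcal{S}_{\mathrm{genus}}$ in $\overline{B_1(0)}$,'' but this is precisely what must be proved. A priori some handle $\Sigma_i^k$ could carry a homology-basis curve $\eta$ whose length stays bounded below by some $\delta_0>0$; such a curve is forced toward the singular axis by the lamination convergence, but it converges in the Hausdorff sense to an \emph{interval} of $\mathcal{S}$, not to a point. That handle then contributes nothing to any $g_j$, and your accounting gives only $g_1+\cdots+g_l\leq g$. The paper isolates exactly this phenomenon (Corollary \ref{CollapseHandleCor}) and eliminates it by an inductive argument on $g$: assuming the theorem for smaller genus, one shows the collapsing handles are \emph{uniformly} collapsing, then uses chord-arc bounds (Lemma \ref{ClosedGeoLem}) to produce closed geodesics $\gamma_i$ of controlled length and position attached to any non-collapsing handle, and finally derives a contradiction from the one-sided curvature estimates and the structure of the lamination near an endpoint of the limit interval of the $\gamma_i$. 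None of this machinery appears in your plan.

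Your argument for item (3) is also not the one the paper uses, and as written it does not yield a contradiction. If a component $U$ of $B_{r_j}(p_j)\cap\Sigma_i$ has two boundary circles $\gamma_1,\gamma_2$, the exterior piece $\Sigma_i\setminus U^\circ$ has boundary $\gamma_1\cup\gamma_2\cup\partial\Sigma_i$, not $\gamma_1\cup\gamma_2$, so you do not obtain a closed surface; and even if you did, an embedded closed surface in $\Real^3$ is not in itself contradictory. The paper instead invokes the no-mixing theorem of \cite{CM5}: a component with disconnected boundary on arbitrarily small scales would place $p_j$ in $\mathcal{S}_{neck}$, while the normalization $r_+(\Sigma_i)=1$ forces the sequence to be ULSC, so $\mathcal{S}_{neck}=\emptyset$. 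This is how connected boundary is obtained in Proposition \ref{WeakGenusLossThm}. Similarly, the full strength of item (4) (arbitrary $\rho$, not just $\rho<\delta_0$) comes only \emph{after} one knows all genus collapses, via Lemma \ref{LocTopProp}; your item (4) argument presupposes the resolved item (2).
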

\begin{rem}
By the genus of $B_r(x)\cap \Sigma$ we mean the sum of the genus
of each component, where the genus of the component is the genus
of the compact surface obtained after gluing disks onto the
boundary.
\end{rem}

The points of $\mathcal{S}_{genus}$ are precisely where (all) the
topology of the sequence concentrates. Importantly, by looking near
points of $\mathcal{S}_{genus}$ and rescaling appropriately, we
construct a new sequence that either continues to satisfy the
hypotheses of Theorem \ref{GenusLossThm} or has uniformly bounded
curvature. This dichotomy will be fundamental in both the proof of
Theorem \ref{GenusLossThm}, which requires an induction on the
genus, and in its applications.
 Theorem \ref{GenusLossThm} is of independent interest as
it imposes some geometric rigidity for $\Sigma \in \mathcal{E}(1,g)$
when $g \geq 2$. Indeed, Theorem \ref{GenusLossThm} quantifies, in a certain sense, the way
$\mathcal{E}(1,g)$ could fail to be compact.

The bulk of this paper is the proof of Theorem \ref{GenusLossThm},
which is contained in Section \ref{GenusLossSec}. Unsurprisingly, we
rely heavily on Colding and Minicozzi's fundamental study of the
structure of embedded minimal surfaces in $\Real^3$. Indeed, a
weaker form of Theorem \ref{GenusLossThm} -- which allows for the
possibility that some topology does not ``collapse''-- is an
immediate consequence of their lamination theory of \cite{CM5}. This
is Proposition \ref{WeakGenusLossThm} below, which will be a step in
the proof.  In order to refine things, we make use of two other
important consequences of their work: the one-sided curvature
estimates of \cite{CM4} and the chord-arc bounds for minimal disks
of \cite{CY}. The techniques in the proof are very similar to those
used in \cite{BBg1Cpct}, though here the arguments are more
technical.  They are also similar to the arguments of
\cite{MPR3,MPR4,MRDuke}, though those papers have different goals.

Throughout we denote extrinsic balls in $\Real^3$, centered at $x$
and with radius $r$, by $B_r(x)$; intrinsic balls in a surface are
denoted by $\mathcal{B}_r(x)$. For a surface $\Sigma$, $|A|^2$
denotes the norm squared of the second fundamental form.  At various
points we will need to consider $\Sigma\cap B_r(x$) and when we do,
we always assume $\partial B_r(x)$ meets $\Sigma$ transversely as
this can always be achieved by arbitrarily small perturbations.

\section{Collapse of the Genus} \label{GenusLossSec}
In order to prove Theorem \ref{GenusLossThm} we will induct on the
genus.  When the genus is one, we can appeal to \cite{BBg1Cpct} to
show that the curvature is bounded uniformly and so Theorem
\ref{GenusLossThm} is vacuous.  The relevant result of
\cite{BBg1Cpct} is recorded as Theorem \ref{CpctnessCorg1} below.
When the genus is larger than one, the theorem will follow more or
less from the no-mixing theorem of \cite{CM5}, after one rules out
the possibility that there are handles in the sequence that do not
``collapse''.  The no-mixing theorem roughly states that, for points
in the singular set $\mathcal{S}$, the topology of the sequence must
behave uniformly in the same manner. Specifically, one cannot have a
sequence of minimal surfaces where near $x \in \mathcal{S}$ the
sequence is uniformly ``disk-like'' (i.e. $x \in
\mathcal{S}_{ulsc}$) whereas near $x \neq y \in \mathcal{S}$ it
looks uniformly ``neck-like'' (i.e. $y \in \mathcal{S}_{neck}$). If
there was a non-collapsed handle, then the nature of the singular
convergence would force it to lie nearer and nearer the singular
axis.  This contradicts certain chord-arc bounds for embedded
minimal surfaces and so cannot occur.  The arguments will be very
similar to those in Section 2.2 in \cite{BBg1Cpct}. Importantly, in
\cite{BBg1Cpct}, the sequence was simply connected on small uniform
scales which is not true in the present case. This introduces
technical difficulties.


\subsection{Topological definitions} \label{TopDefSec}We first
introduce a number of definitions and state
some simple propositions regarding the topological structure of
surfaces, $\Sigma \in \mathcal{E}(1,g,R)$.  These are all easy
consequences of the classification of surfaces. The first result
gives a basis for $H_1(\Sigma)$ in terms of embedded closed curves
with certain nice properties.
\begin{defn} Let $\Sigma \in \mathcal{E}(1,g,R)$. We call a collection of simple closed curves $\eta_1, \ldots, \eta_{2g}$ in $\Sigma$ that satisfies $\# \set{p| p \in \eta_i \cap \eta_j}=\delta_{i+g,j}$ a \emph{homology basis of $\Sigma$}.
\end{defn}
\begin{prop} \label{homolbasisprop} Any $\Sigma\in \mathcal{E}(1,g,R)$ contains a homology basis $\eta_1,\ldots, \eta_{2g}$. The homology classes $[\eta_i]$ generate $H_1(\Sigma)$. Furthermore, any closed curve $\eta\subset \Sigma \backslash \cup_i \eta_i$ is separating, that is $\Sigma \backslash \eta$ has at least two components.
\end{prop}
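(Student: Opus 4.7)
The plan is to reduce everything to a polygonal model for $\Sigma$, which exists because $\Sigma \in \mathcal{E}(1,g,R)$ is a compact, connected, orientable surface of genus $g$ with one boundary component. By the classification of surfaces, $\Sigma$ is homeomorphic to a $(4g+1)$-gon $P$ with edge-word $a_1 b_1 a_1^{-1} b_1^{-1} \cdots a_g b_g a_g^{-1} b_g^{-1} c$, where each $a_i$ and $b_i$ is identified with its inverse-labeled counterpart, while the edge $c$ is left free and becomes $\partial \Sigma$. The first step is to check the corner identification classes induced by this edge word: a bookkeeping verification shows that all $4g$ corners of $P$ not incident to $c$ are identified to a single vertex $v\in \Sigma$. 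Consequently, the images of the edges $a_i$ and $b_i$ are simple closed curves through $v$ whose pairwise intersections are exactly $a_i\cap b_i=\{v\}$ (all other pairs being disjoint). Setting $\eta_i=a_i$ and $\eta_{g+i}=b_i$ for $1\le i\le g$ therefore gives a collection satisfying $\#(\eta_i\cap \eta_j)=\delta_{i+g,j}$, i.e., a homology basis in the sense of the definition.

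Next I would verify that the $[\eta_i]$ generate $H_1(\Sigma;\mathbb{Z})$. Using the CW structure on $\Sigma$ coming from $P$ (one $2$-cell, the $1$-cells $a_1,\dots,a_g,b_1,\dots,b_g,c$, and the vertices), the cellular boundary from the $2$-cell is a word whose abelianization is $[c]$, showing $[c]=0$ in $H_1(\Sigma)$ and that $\{[a_i],[b_i]\}_{i=1}^g$ freely generate $H_1(\Sigma)\cong\mathbb{Z}^{2g}$. The same conclusion follows more directly from the observation that $\Sigma$ deformation retracts onto the bouquet of the $\eta_i$ at $v$.

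For the separating assertion, I would cut $\Sigma$ along $\bigcup_{i=1}^{2g}\eta_i$. By the construction, the resulting surface is homeomorphic to the interior of $P$ together with the free edge $c$, which is a topological disk $D$. Any simple closed curve $\eta\subset \Sigma\setminus \bigcup_i \eta_i$ is contained in $D$, and the Jordan curve theorem applied inside $D$ yields two components of $D\setminus \eta$, which descend to (at least) two components of $\Sigma\setminus \eta$. The one point that requires genuine care, and which I expect to be the main (though modest) obstacle, is the polygon-corner bookkeeping needed in the first step to guarantee that the $\eta_i$ realize exactly the prescribed intersection pattern; once this has been pinned down, the homology-generation and separation statements follow from standard cellular and planar topology arguments.
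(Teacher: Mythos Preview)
The paper does not actually prove this proposition; it simply remarks that the statements in this subsection are ``easy consequences of the classification of surfaces.'' Your approach via the polygon model is precisely the expected unpacking of that remark.

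That said, the bookkeeping claim in your first step is incorrect. In the $(4g+1)$-gon with edge-word $a_1b_1a_1^{-1}b_1^{-1}\cdots a_gb_ga_g^{-1}b_g^{-1}c$, \emph{all} $4g+1$ corners are identified to a single vertex $v$: trace the identifications for $g=2$, for instance, and you will see $v_0\sim v_3\sim v_2\sim v_1\sim v_4\sim v_7\sim v_6\sim v_5\sim v_8$. Consequently, \emph{every} pair of curves among the $a_i,b_j$ meets at $v$, not just the pairs $(a_i,b_i)$. For $g\geq 2$ your curves therefore fail the condition $\#(\eta_i\cap\eta_j)=\delta_{i+g,j}$, and you were right to flag this step as the delicate one---it is exactly where the argument, as written, breaks.

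The repair is standard: push each pair $(a_i,b_i)$ slightly off $v$ to its own nearby basepoint $v_i$, so that $a_i$ and $b_i$ still meet once (transversally) at $v_i$ while all other pairs become disjoint. Equivalently, construct the curves handle-by-handle from a handle decomposition rather than from the single-vertex polygon model. These perturbations do not change homology classes, so your cellular computation that the $[\eta_i]$ freely generate $H_1(\Sigma)\cong\mathbb{Z}^{2g}$ still holds. For the separating statement, one then checks (again a standard fact about geometric symplectic bases) that cutting $\Sigma$ along the perturbed system still yields a disk, after which your Jordan curve argument goes through; note that only the bounded component of $D\setminus\eta$ is automatically unaffected by the regluing, which is enough to give at least two components of $\Sigma\setminus\eta$.
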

Another consequence is that we can decompose $\Sigma$ into once
punctured tori, which by abuse of terminology we refer to as
\emph{handles}.  To that end we introduce the following definition
and an immediate consequence:
\begin{defn}
We say a set $\set{\Sigma^1,\ldots, \Sigma^g}$ of pair-wise disjoint
surfaces is a \textit{handle decomposition} of $\Sigma\in
\mathcal{E}(1,g,R)$ if each $\Sigma^i\subset \Sigma$ is a compact genus
$1$ surface with connected boundary that contains closed curves
$\eta_{i},\eta_{i+g}$ so that $\eta_1,\ldots \eta_{2g}$ are a
homology basis of $\Sigma$.
\end{defn}
\begin{prop}
Let $\Sigma\in \mathcal{E}(1,g,R)$ and let $\eta_i$ be as above.
Then there are closed disjoint sub-surfaces of $\Sigma$,
$\Sigma^1,\ldots, \Sigma^g$, with connected boundary and genus one
such that $\Sigma^i$ contains $\eta_i, \eta_{i+g}$.  Moreover,
$\Sigma\backslash \cup_i \Sigma^i$ is a planar domain with $g+1$
boundary components.
\end{prop}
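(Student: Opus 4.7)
The plan is to construct each $\Sigma^i$ as a closed regular neighborhood of $\eta_i \cup \eta_{i+g}$ in $\Sigma$, verify that these are pairwise disjoint once-punctured tori, and then use the separating property from Proposition \ref{homolbasisprop} together with Euler characteristic bookkeeping to recognize the complement as a connected planar domain with $g+1$ boundary circles.

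The construction of the disjoint neighborhoods is the easy step: the intersection condition $\#\set{p \in \eta_i \cap \eta_j} = \delta_{i+g,j}$ forces $(\eta_i \cup \eta_{i+g}) \cap (\eta_j \cup \eta_{j+g}) = \emptyset$ for $i \neq j$, so I can shrink the regular neighborhoods to obtain pairwise disjoint closed sub-surfaces $\Sigma^i \supset \eta_i \cup \eta_{i+g}$. Since $\eta_i \cup \eta_{i+g}$ deformation retracts onto $\Sigma^i$ and is a figure-eight, $\chi(\Sigma^i) = -1$. To see that $\Sigma^i$ is a once-punctured torus and not a pair of pants, I would use that $\eta_i, \eta_{i+g}$ meet transversely at a single point, so the four local half-branches alternate around the intersection, and a thickening of such an embedded figure-eight is a handle; equivalently, $\partial \Sigma^i$ is freely homotopic to the commutator $[\eta_i,\eta_{i+g}]$, a single loop, so $\partial \Sigma^i$ is a single embedded circle. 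Combined with $\chi(\Sigma^i) = -1$, this forces $\Sigma^i$ to be a compact orientable genus-one surface with connected boundary.

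The main obstacle is the final step, verifying that the complement $P := \overline{\Sigma \setminus \cup_i \Sigma^i}$ is connected and planar, since the Euler characteristic alone does not immediately rule out higher-genus components. Additivity first gives $\chi(P) = (1 - 2g) - g(-1) = 1 - g$, and $\partial P$ has the $g+1$ components $\partial \Sigma^1, \ldots, \partial \Sigma^g, \partial \Sigma$. To kill possible genus, let $P_j$ be any connected component of $P$ and let $\gamma$ be a simple closed curve in the interior of $P_j$. Then $\gamma \subset \Sigma \setminus \cup_i \eta_i$, so by Proposition \ref{homolbasisprop} $\gamma$ separates $\Sigma$; because $\gamma$ is two-sided and lies in the interior of $P_j$, the two sides of $\gamma$ each meet $P_j$, so $\gamma$ separates $P_j$ as well. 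Thus every simple closed curve in the interior of $P_j$ is separating, which forces $P_j$ to be of genus zero. Writing $k$ for the number of components of $P$ and $b_j$ for the number of boundary circles of $P_j$, I have $\sum_j b_j = g+1$ and $\chi(P) = \sum_j (2 - b_j) = 2k - (g+1)$, so $k = 1$. Hence $P$ is the claimed connected planar domain with $g+1$ boundary components.
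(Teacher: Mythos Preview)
Your proof is correct. The paper does not actually supply a proof of this proposition; it is stated as an ``immediate consequence'' of the classification of surfaces, in the same spirit as Proposition~\ref{homolbasisprop}. Your argument via regular neighborhoods of the pairs $\eta_i\cup\eta_{i+g}$, followed by the separating property of Proposition~\ref{homolbasisprop} to force the complement to be planar and an Euler-characteristic count to force it to be connected, is exactly the standard unpacking of that appeal to classification, so there is no meaningful difference in approach to compare.
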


Continuing with our abuse of notation, we refer to $\Sigma^{k}$ as a
\emph{$k$-handle} if it is a compact genus $k$-surface with
connected boundary.  A \emph{generalized handle decomposition} of
$\Sigma \in \mathcal{E}(1,g,R)$ is a set $\set{\Sigma^{1,k_1},
\ldots , \Sigma^{l,k_l}}$ of pairwise disjoint subsets of $\Sigma$
so that each $\Sigma^{j,k_j}$ is a $k_j$-handle and
$k_1+\ldots+k_l=g$.

We now fix the language we will use to define the extrinsic scale(s)
of the genus:
\begin{defn}\label{outerScaleDef}
For $\Sigma\in \mathcal{E}(1,g,R)$ let
\begin{multline*}
    r_+(\Sigma)=\inf_{x\in B_R}\inf\left\{ r: B_r(x)\subset B_R \right. \mbox{and}  \left. B_r( x) \cap \Sigma \mbox{ has a component of genus $g$} \right\}.
\end{multline*}
We call $r_+(\Sigma)$ the \emph{outer extrinsic scale of the genus}
of $\Sigma$. Furthermore, suppose for all $\epsilon>0$, one of the
components of $B_{r_+(\Sigma)+\epsilon}(x)\cap \Sigma$ has genus
$g$; then we say the genus is \emph{centered at} $x$.
\end{defn}

The outer scale of the genus measures how spread out all the handles
are and the center of the genus should be thought of as a ``center
of mass" of the handles. We also need to measure the scale of
individual handles and to that end define:
\begin{defn} \label{InnerScaleLocDef}
For $\Sigma\in \mathcal{E}(1,g,R)$ and $x\in B_{R}$ let
\begin{equation*} r_-(\Sigma,x)=\sup \left\{ r: B_r(x)\subset
B_R(0) \mbox{ and } B_r( x) \cap \Sigma \mbox{ is genus zero} \right\}.
\end{equation*}
If the genus of $B_r(x)\cap \Sigma$ is zero whenever
$B_r(x)\subset B_R(0)$,  set $r_-(\Sigma,x)=\infty$. Define
$r_-(\Sigma)=\inf_{x\in B_R(0)} r_-(\Sigma,x)$.
\end{defn}

We recall a simple topological lemma that is a localization of
Proposition A.1 of \cite{BB2} and is proved using the maximum
principle in an identical manner.
\begin{lem} \label{LocTopProp}
Let $\Sigma\in \mathcal{E}(1,g,R)$ and suppose the genus is centered
at $x$.  If $\bar{B}_r(y)\cap \bar{B}_{r_+(\Sigma)}(x)=\emptyset$
and $B_r(y)\subset B_R(0)$, then each component of $B_r(y)\cap
\Sigma$ is a disk.  Moreover, if $\bar{B}_{r_+(\Sigma)}(x)\subset
B_r(y)\subset B_R(0)$, then one component of $B_r(y)\cap \Sigma$ has
genus $g$ and connected boundary and all other components are disks.
\end{lem}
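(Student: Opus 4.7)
The plan is to combine a topological counting step that absorbs all the genus into a single component near $x$ with a maximum-principle step, imported essentially verbatim from Proposition A.1 of \cite{BB2}, that rules out multi-boundary planar components in $B_r(y)$.

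First, I would fix $\epsilon > 0$ small enough that $\bar{B}_r(y) \cap \bar{B}_{r_+(\Sigma)+\epsilon}(x) = \emptyset$; under the hypothesis of the first assertion such $\epsilon$ exists by continuity. The centering hypothesis then produces a component $\Sigma^*_\epsilon$ of $B_{r_+(\Sigma)+\epsilon}(x) \cap \Sigma$ of genus $g$. After a generic perturbation, $\Sigma \cap \partial B_{r_+(\Sigma)+\epsilon}(x)$ is a union of disjoint transverse simple closed curves, and cutting a connected genus-$g$ surface with connected boundary along such curves cannot increase the total genus: each non-separating cut drops the genus by one and each separating cut splits it additively. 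Since $\Sigma^*_\epsilon$ already absorbs all of $g$, every other component of $B_{r_+(\Sigma)+\epsilon}(x) \cap \Sigma$, and every component of $\Sigma \setminus \bar{B}_{r_+(\Sigma)+\epsilon}(x)$, must be a planar domain. In particular every component of $B_r(y) \cap \Sigma$ is planar.

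To upgrade ``planar'' to ``disk'', suppose for contradiction that some component $\Sigma' \subset B_r(y) \cap \Sigma$ has $k \geq 2$ boundary circles $\sigma_1,\ldots,\sigma_k$ on $\partial B_r(y)$. Each $\sigma_j$ bounds a pair of disks on $\partial B_r(y) \cong S^2$, and one can choose such disks $D_1,\ldots,D_k$ so that $\Sigma' \cup \bigcup_j D_j$ is a closed embedded topological sphere in $\bar{B}_r(y)$. By Alexander's theorem this sphere bounds a topological $3$-ball $W \subset \Real^3$. Standard separation arguments, using that $\Sigma$ is connected, that $\partial \Sigma \subset \partial B_R(0)$ is connected, and that $\partial \Sigma$ is disjoint from $W$, produce a compact connected piece of $\Sigma$ sitting inside $\bar{W}$ with boundary only on $\bigcup_j D_j$. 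Applying the strong maximum principle to the strictly subharmonic function $|z-y|^2$ (or to a suitable linear coordinate) on this piece yields a contradiction, exactly as in \cite{BB2}.

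For the second assertion, when $\bar{B}_{r_+(\Sigma)}(x) \subset B_r(y) \subset B_R(0)$, I would apply the same two steps in the shell $B_r(y) \setminus \bar{B}_{r_+(\Sigma)+\epsilon}(x)$: each component of $\Sigma$ there is a planar domain, and the maximum-principle argument applied with respect to the outer sphere $\partial B_r(y)$ shows each such component has a single boundary circle on $\partial B_r(y)$. Gluing the unique component that touches $\partial \Sigma^*_\epsilon$ back onto $\Sigma^*_\epsilon$ produces the advertised component of $B_r(y) \cap \Sigma$ of genus $g$ with connected boundary; the remaining components are disks. The main obstacle is the maximum-principle step: one must locate the correct compact piece of $\Sigma$ inside $W$ and exploit the connectedness and far location of $\partial \Sigma$ to prevent it from ``escaping'' through $\partial \Sigma$. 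Once this is handled, the remaining argument is bookkeeping using the classification of surfaces.
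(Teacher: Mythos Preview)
Your overall plan---absorb all the genus into one component via the centering hypothesis and then invoke the maximum principle as in \cite{BB2}---matches the paper, which gives no argument beyond citing Proposition~A.1 of \cite{BB2} and asserting the proof is identical. The genus-counting step is fine. However, the maximum-principle step you describe is misdirected and, as written, does not yield a contradiction.

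Capping $\Sigma'$ with disks $D_j\subset\partial B_r(y)$ produces a sphere bounding a ball $W\subset\bar B_r(y)$, but nothing forces a ``trapped'' piece of $\Sigma$ to lie in $\mathring W$: the rest of $\Sigma$ attaches to $\Sigma'$ only along the $\sigma_j$, and across each $\sigma_j$ it exits $B_r(y)$ and hence exits $W$. Even if some other component of $\Sigma\cap B_r(y)$ happened to lie in $W$, applying the maximum principle to $|z-y|^2$ on it only shows it lies in $\bar B_r(y)$, which you already know. No contradiction arises.

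The correct use of the maximum principle is on the \emph{outside}. Any component of $\Sigma\setminus\bar B_r(y)$ that misses $\partial\Sigma$ is compact with boundary on $\partial B_r(y)$; the convex hull property then forces it into $\bar B_r(y)$, a contradiction. Since $\partial\Sigma$ is connected, $\Sigma_{\mathrm{out}}:=\Sigma\setminus\bar B_r(y)$ is therefore connected. Now an Euler-characteristic count over the decomposition $\Sigma=\Sigma_{\mathrm{out}}\cup\bigcup_\alpha\Sigma'_\alpha$ (with $\Sigma'_\alpha$ the components of $\Sigma\cap B_r(y)$, each of genus $g_\alpha$ with $k_\alpha$ boundary circles) gives
\[
g(\Sigma)=g(\Sigma_{\mathrm{out}})+\sum_\alpha\bigl(g_\alpha+k_\alpha-1\bigr).
\]
In the first assertion $g(\Sigma_{\mathrm{out}})=g$ and every $g_\alpha=0$, forcing every $k_\alpha=1$; in the second assertion $g(\Sigma_{\mathrm{out}})=0$ and one $g_\alpha=g$, again forcing every $k_\alpha=1$ and every other $g_\alpha=0$. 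This is presumably the argument of \cite{BB2}; your sphere-and-Alexander construction is not needed.
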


\subsection{Uniform collapse}\label{unifcollapsesec}
In order to prove Theorem \ref{GenusLossThm} we will need to
distinguish between handles in the sequence that collapse and
those that do not. By ``collapsing'', we mean handles that are
eventually contained in arbitrarily small extrinsic balls. The
collapsed handles will be further divided into those that collapse
at a ``uniform'' rate and those that do not.  ``Uniform'' collapse
implies that the geometry becomes small in a manner that is
amenable to a blow-up analysis. To help motivate our definition of
uniform we recall Theorem 1.3 of \cite{BBg1Cpct}, which
essentially says that control on both scales of the
genus gives compactness.
\begin{thm} \label{CpctnessCorg1}
Suppose $\Sigma_i \in \mathcal{E}(1,g,R_i)$ are such that
$1=r_-(\Sigma_i)\geq \alpha r_+(\Sigma_i)$, the genus of each
$\Sigma_i$ is centered at $0$ and $R_i\to \infty$.  Then a
sub-sequence of the $\Sigma_i$ converges uniformly in $C^\infty$ on
compact subsets of $\Real^3$ and with multiplicity one to a surface
$\Sigma_\infty\in \mathcal{E}(1,g)$ and
$1=r_-(\Sigma_\infty)\geq \alpha r_+(\Sigma_\infty)$.
\end{thm}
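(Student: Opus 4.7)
The plan is to reduce the theorem to a uniform curvature bound on compact subsets of $\Real^3$: once such a bound is in hand, standard compactness for embedded minimal surfaces produces a smooth subsequential limit, and then the scale and genus properties follow from straightforward limiting arguments. So the real content is the curvature estimate.

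To obtain the curvature bound I would argue by contradiction: suppose $\sup_{B_\rho(0)\cap \Sigma_i} |A|^2 \to \infty$ for some fixed $\rho>0$. Because $r_-(\Sigma_i)=1$, every ball of radius less than $1$ meets $\Sigma_i$ in genus-zero pieces, so the sequence is uniformly locally simply connected at a fixed positive scale. Passing to a subsequence, Colding--Minicozzi lamination theory (Theorem 0.9 of \cite{CM5}) produces a limit lamination $\mathcal{L}$ of $\Real^3$ with nonempty singular set $\mathcal{S}=\mathcal{S}_{ulsc}$ (the no-mixing theorem rules out $\mathcal{S}_{neck}$), and the $\Sigma_i$ collapse helicoid-fashion along $\mathcal{S}$. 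The one-sided curvature estimate of \cite{CM4} supplies uniform curvature control away from $\mathcal{S}$. Next I would invoke the two scale hypotheses: since the genus is centered at $0$ with $r_+(\Sigma_i)\le 1/\alpha$, every handle is contained in $B_{1/\alpha}(0)$, so outside this ball each $\Sigma_i$ consists of disks and the lamination picture there is the usual helicoid blowdown. Inside $B_{1/\alpha}(0)$, any curvature blowup would force handles to concentrate along the singular axis at a scale tending to zero, but $r_-(\Sigma_i)=1$ prevents any handle from being confined to a ball of radius less than $1$. The chord-arc bounds of \cite{CY} turn this into a quantitative contradiction, since a handle of intrinsic diameter on the order of $1$ cannot be trapped in an arbitrarily thin solid cylinder about the axis. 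This rules out curvature blowup and yields uniform bounds on each $B_\rho(0)$.

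With uniform curvature estimates on compact sets in hand, a diagonal argument together with the standard smooth compactness for embedded minimal surfaces (Allard regularity plus embeddedness) extracts a smooth subsequential limit $\Sigma_\infty$, properly embedded, with multiplicity one -- multiplicity-one follows from the fact that each $\Sigma_i$ has connected boundary on $\partial B_{R_i}$ with $R_i\to \infty$, so no sheet-doubling can occur in the limit. To verify $\Sigma_\infty \in \mathcal{E}(1,g)$ with the claimed scale identities, upper semicontinuity of genus under smooth convergence gives $\mathrm{genus}(\Sigma_\infty)\le g$; conversely $r_-(\Sigma_i)=1$ prevents handles from collapsing to points, and a ball-by-ball genus count across the smooth limit shows that all $g$ handles persist, so the limit has genus exactly $g$. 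The bound $r_+(\Sigma_i)\le 1/\alpha$ together with centering at $0$ prevents any handle from escaping to infinity, so the genus of $\Sigma_\infty$ is still centered at $0$ and $r_+(\Sigma_\infty)\le 1/\alpha$; a direct limit in Definition \ref{InnerScaleLocDef} yields $r_-(\Sigma_\infty)=1$.

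The hard step is clearly the curvature estimate inside the genus region $B_{1/\alpha}(0)$: one cannot simply invoke the one-sided curvature estimate there, because the $\Sigma_i$ are not disks, and one must combine the ulsc condition (coming from $r_-(\Sigma_i)=1$) with the chord-arc bounds to trap the hypothetical collapsing handle near the singular axis and obtain a contradiction. This is exactly the place where both scale hypotheses -- and not just one of them -- are essential.
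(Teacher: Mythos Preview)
The paper does not actually prove Theorem~\ref{CpctnessCorg1}. It is quoted verbatim as Theorem~1.3 of \cite{BBg1Cpct} (see the sentence introducing it: ``we recall Theorem 1.3 of \cite{BBg1Cpct}\ldots''), and is used as a black box throughout. So there is no proof in this paper for your attempt to be compared against.

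That said, your outline is broadly in the spirit of the arguments in \cite{BBg1Cpct} and of the related arguments elsewhere in this paper, but two steps are underspecified and one is not quite correct as stated.

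First, the claim that $r_-(\Sigma_i)=1$ implies the sequence is ULSC is not immediate: $r_-=1$ only says small balls meet $\Sigma_i$ in \emph{genus-zero} pieces, which could still be annuli or planar domains with several boundary circles, not disks. In this paper the ULSC property is obtained via the no-mixing theorem together with the bound on $r_+$ (see the first paragraph of the proof of Proposition~\ref{WeakGenusLossThm}, which defers to Lemma~3.5 of \cite{BBg1Cpct}); your sketch should route through that argument rather than asserting ULSC directly from $r_-$.

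Second, the contradiction step (``a handle of intrinsic diameter on the order of $1$ cannot be trapped in an arbitrarily thin solid cylinder about the axis'') is the heart of the matter and is where \cite{BBg1Cpct} does real work: one produces closed geodesics or geodesic lassos of controlled length, shows they Hausdorff-converge to an interval on the singular axis, and then uses the one-sided curvature estimate near an endpoint to force a piece of $\Sigma_i$ to be graphical over a horizontal plane, contradicting that the geodesic converges to a vertical segment. Your appeal to chord-arc bounds gestures at this, but the actual mechanism is more delicate than ``handles need room''; compare the treatment of Cases~(1) and~(2) in the proof of Theorem~\ref{GenusLossThm} in this paper, which is modeled on the same circle of ideas.

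Finally, the justification of multiplicity one from ``connected boundary on $\partial B_{R_i}$'' is too quick; the standard route is via instability (no embedded minimal surface in $\Real^3$ is stable) to rule out higher-multiplicity smooth limits.
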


We make the following technical definition that specifies when a $k$-handle in a sequence $\Sigma_i\in \mathcal{E}(1,g,R)$ collapses uniformly.  As a consequence we can study the handle uniformly on the scale of the collapse.  Notice that by the lamination theory of \cite{CM5} and Theorem \ref{CpctnessCorg1}, a curvature bound is equivalent to a lower bound on $r_-$.
\begin{defn}\label{unifcollapsedef}
Let $\Sigma_i\in \mathcal{E}(1,g,R)$ and let $\Sigma_i'\subset
\Sigma_i$ be a sequence of $k$-handles in $\Sigma_i$.  We say that
$\Sigma_i'$ \textit{collapse uniformly at rate $\lambda_i$ to a
point $p$} if there are sequences $0<r_i<R$ and $\lambda_i\to 0$
with $r_i/\lambda_i \to \infty$, and points $p_i\to p$ satisfying
$B_{r_i}(p_i)\subset B_R$,
 so that $\Sigma_i'-p_i \in
\mathcal{E}(1,k,2\lambda_i)$, $\Sigma_i'\subset \Sigma_i'{}'\subset
\Sigma_i$ with $\Sigma_i'{}' -p_i\in \mathcal{E}(1,k,r_i)$, the
genus of $\Sigma_i'{}'$ is centered at $p_i$ with
$r_+(\Sigma_i'{}')=\lambda_i$ and $\lambda_i \sup_{\Sigma_i'} |A|
\leq C<\infty$.
\end{defn}

As the name indicates, there is a uniformity to the geometry of such
a sequence of handles.  We make this more precise in the following
result.
\begin{lem}\label{UniformGeomCor} \label{UniformGeomLem}
Let $\Sigma_i \in \mathcal{E}(1,g,R)$ and suppose $\Sigma_i'\subset
\Sigma_i$ is a sequence of $k$-handles collapsing uniformly at rate
$\lambda_i$ to some point $p$.  Then $\limsup_{i\to\infty}
\lambda_i^{-1} \diam (\Sigma_i') <\infty$.  Further, there exists a
closed geodesic $\gamma_i\subset \Sigma_i$ homotopic to $\partial
\Sigma_i'$ so that $\limsup_{i\to \infty} \lambda_i^{-1}
\ell(\gamma_i)<\infty$ and $\limsup_{i\to \infty}
\lambda_i^{-1}\dist_{\Sigma_i} (\gamma_i, \Sigma_i')<\infty$.
\end{lem}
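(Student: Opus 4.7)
The plan is to rescale to a fixed scale by $\lambda_i^{-1}$, use smooth compactness to extract a limit, and read off the required length and distance bounds from the limit. Set $\tilde\Sigma_i':=\lambda_i^{-1}(\Sigma_i'-p_i)$ and $\tilde\Sigma_i'':=\lambda_i^{-1}(\Sigma_i''-p_i)$. By the uniform-collapse hypothesis, $\tilde\Sigma_i'\in\mathcal{E}(1,k,2)$ is a sequence of compact embedded minimal $k$-handles in $\overline{B_2(0)}$ with $|A|\le C$, while $\tilde\Sigma_i''\in\mathcal{E}(1,k,r_i/\lambda_i)$ with $r_i/\lambda_i\to\infty$ and outer scale of genus equal to $1$. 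Standard smooth compactness for embedded minimal surfaces with uniformly bounded second fundamental form in a compact region, together with the preservation of the topological type $(1,k)$ under embedded convergence, yields a subsequence $\tilde\Sigma_i'\to\tilde\Sigma_\infty'\in\mathcal{E}(1,k,2)$ smoothly with multiplicity one.

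The diameter bound is then immediate: $\tilde\Sigma_\infty'$ is compact so $\diam(\tilde\Sigma_\infty')<\infty$, and smooth multiplicity-one convergence gives $\diam(\tilde\Sigma_i')\to\diam(\tilde\Sigma_\infty')$, which upon rescaling is exactly $\lambda_i^{-1}\diam(\Sigma_i')\le C$.

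For the closed geodesic I would work inside the larger compact handle $\tilde\Sigma_i''$, where $\partial\tilde\Sigma_i'$ is an interior curve. For $k\ge 1$ the class $[\partial\tilde\Sigma_i']$ is non-trivial in $\pi_1(\tilde\Sigma_i'')$ — it is a nonzero product of $k$ commutators in the rank-$2k$ free group — so compactness of $\tilde\Sigma_i''$ gives an attained length minimizer $\tilde\gamma_i$ in this class, with $\ell(\tilde\gamma_i)\le\ell(\partial\tilde\Sigma_i')\le C$ (the latter from the smooth convergence). Such a minimizer is either an interior closed geodesic of $\tilde\Sigma_i''$ or its unique boundary component $\partial\tilde\Sigma_i''$. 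But $\partial\tilde\Sigma_i''\subset\partial B_{r_i/\lambda_i}(0)$, and the minimal-surface monotonicity formula at the origin combined with the coarea formula forces $\ell(\partial\tilde\Sigma_i'')\ge 2\pi r_i/\lambda_i\to\infty$; hence for all large $i$ the minimizer is an interior closed geodesic $\tilde\gamma_i$ of length $\le C$, freely homotopic in $\tilde\Sigma_i$ to $\partial\tilde\Sigma_i'$. For the intrinsic-distance bound, $\tilde\gamma_i$ and $\partial\tilde\Sigma_i'$ cobound an embedded annulus in $\tilde\Sigma_i''$ with boundary of total length $\le 2C$; the minimal-surface isoperimetric inequality combined with the chord-arc bounds of \cite{CY} in the disk-like region outside $B_1(0)$ guaranteed by Lemma \ref{LocTopProp} then bounds the intrinsic diameter of this annulus by a constant. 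Rescaling by $\lambda_i$ recovers the claimed length and distance bounds on $\gamma_i\subset\Sigma_i$.

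The main technical obstacle is controlling the geometry of $\tilde\Sigma_i''\setminus\tilde\Sigma_i'$, where $|A|$ is not a priori bounded. Both the lower bound $\ell(\partial\tilde\Sigma_i'')\to\infty$ and the intrinsic-diameter bound on the cobounding annulus live in this region and require extracting the disk-like structure of Lemma \ref{LocTopProp} together with the Colding--Minicozzi one-sided curvature estimates and the Colding--Yau chord-arc bounds; without this control the length minimizer could coincide with the outer boundary or stretch along a long disk-like neck, in neither case yielding the desired closed geodesic of $\Sigma_i$.
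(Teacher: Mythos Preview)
Your overall strategy---rescale by $\lambda_i^{-1}$ and read the bounds off a limit---is the paper's, but in both halves you reach for harder tools than are needed and leave precisely the gaps you yourself flag in your last paragraph.

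For the diameter bound, ``standard smooth compactness'' applied to the $\tilde\Sigma_i'$ alone does not give what you claim: bounded $|A|$ in a compact ball yields subsequential convergence to a lamination, not automatically to a single multiplicity-one surface in $\mathcal{E}(1,k,2)$, and ``preservation of the topological type'' is exactly what is delicate. The paper instead applies Theorem~\ref{CpctnessCorg1} directly to the sequence $\tilde\Sigma_i''$ you already set up. These satisfy its hypotheses---$r_+(\tilde\Sigma_i'')=1$, the curvature bound on $\tilde\Sigma_i'$ forces $r_-(\tilde\Sigma_i'')\ge\alpha>0$, and $r_i/\lambda_i\to\infty$---so one obtains a smooth multiplicity-one limit $\tilde\Sigma_\infty\in\mathcal{E}(1,k)$. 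The genus-$k$ component of $\tilde\Sigma_\infty\cap B_2$ given by Lemma~\ref{LocTopProp} is compact with some finite diameter $D$, and for large $i$ the rescaled handle is a small $C^2$ graph over it; the diameter bound follows with no further work.

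For the closed geodesic, the dichotomy ``either an interior closed geodesic or $\partial\tilde\Sigma_i''$\,'' is not valid on a surface with boundary (a minimizing sequence can drift toward the boundary without $\partial\tilde\Sigma_i''$ realizing the infimum), and your distance bound via ``isoperimetric plus chord-arc'' lives precisely in the region where, as you say, $|A|$ is uncontrolled. The paper bypasses all of this with Lemma~\ref{NoThinTubesLem}. The convergence above already supplies a curve $\gamma_i'$ homotopic to $\partial\tilde\Sigma_i'$ with $\dist_{\tilde\Sigma_i}(\gamma_i',\tilde\Sigma_i')+\int_{\gamma_i'}(1+|k_g|)\le D'$. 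Any curve in the same homotopy class with length at most $D'$ cobounds with $\gamma_i'$ a genus-zero minimal annulus, and Lemma~\ref{NoThinTubesLem} bounds the width of that annulus by a constant $C(D')$. This single estimate both prevents a length-minimizing sequence from escaping---so the direct method yields a closed geodesic $\gamma_i$---and simultaneously bounds $\dist_{\tilde\Sigma_i}(\gamma_i,\gamma_i')$. No monotonicity, one-sided curvature estimates, or chord-arc bounds enter.
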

\begin{proof} We first prove the diameter bound by contradiction. To that end, assume there exists a
sub-sequence $\Sigma_i$ such that $\lim_{i \to \infty}\lambda_i^{-1}
\diam (\Sigma_i')=\infty$. Notice that $\tilde{\Sigma}_i =
\lambda_i^{-1}(\Sigma_i''-p_i)$ (where $p_i$ are as in the
definition) satisfy the conditions of Theorem \ref{CpctnessCorg1}
and so sub-sequentially converge in $C^\infty$ on compact subsets of
$\Real^3$ to a $\tilde{\Sigma}_{\infty}\in \mathcal{E}(1,k)$ that
satisfies $r_+(\tilde{\Sigma}_\infty)=1$. By Lemma \ref{LocTopProp},
there exists one component, $\tilde{\Sigma}_\infty^0$, of
$\tilde{\Sigma}_\infty \cap B_2$ with genus $k$. Set $D=\diam
\tilde{\Sigma}_\infty<\infty$.  For $i$ large, there exists some
component of each $\tilde{\Sigma}_i$, call it $\tilde{\Sigma}_i^0$,
so that $\tilde{\Sigma}_i^0 $ can be written as the graph of some
function $u_i$ over $\tilde{\Sigma}_\infty^0$ and $||u_i||_{C^2}\to
0$.  Thus, for sufficiently large $i$, $\diam(\tilde{\Sigma}_i ^0)
\leq 2D$, a contradiction.

The uniform diameter bound implies that there exists a curve
$\gamma_i'\subset \tilde{\Sigma}_i$ homotopic to $\partial
\tilde{\Sigma}_i'$ with
$\dist_{\tilde{\Sigma}_i}(\gamma_i',\tilde{\Sigma}_i')+\int_{\gamma_i'}
(1+|k_g|)\leq D'<\infty$. Lemma \ref{NoThinTubesLem} below allows us
to argue by direct methods that there exists a length minimizer,
$\gamma_i$, in the homotopy class of $\gamma_i'$ with
$\dist_{\tilde{\Sigma}_i}(\gamma_i,\gamma_i')< C(D')$.  This
proves the lemma.
\end{proof}

In the above proof we used Lemma 2.2 of \cite{BBg1Cpct}.  As we use
it extensively in this paper, we record it here:
\begin{lem} \label{NoThinTubesLem}
Let $\Gamma$ be a minimal surface with genus $g$ and with $\partial
\Gamma=\gamma_1\cup \gamma_2$ where the $\gamma_i$ are smooth and
satisfy $\int_{\gamma_i} 1+|k_g| \leq C_1$.
Then, there exists $C_2=C_2(g,C_1)$ so that $\dist_{\Gamma}
(\gamma_1,\gamma_2)\leq C_2$.
\end{lem}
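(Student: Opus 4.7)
My plan is to combine Gauss--Bonnet with a sweep-out of $\Gamma$ by distance level sets.

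\emph{Total curvature bound.} Applying Gauss--Bonnet to $\Gamma$, and using $\chi(\Gamma)=-2g$ (genus $g$ with two boundary components), gives
\[
\int_\Gamma K\,dA \;=\; -4\pi g \;-\; \int_{\partial\Gamma} k_g\, ds.
\]
Since $K\leq 0$ by minimality and $\int_{\partial\Gamma}|k_g|\,ds \leq 2C_1$, this yields the total absolute curvature bound $\int_\Gamma |K|\,dA \leq 4\pi g + 2C_1 =: K_0$.

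\emph{Sweep-out by distance level sets.} Setting $\phi := \dist_\Gamma(\cdot,\gamma_1)$, $D := \dist_\Gamma(\gamma_1,\gamma_2)$, $\Gamma_t := \{\phi<t\}$, $L_t := \phi^{-1}(t)$, and $\ell(t):=\mathcal{H}^1(L_t)$, the first variation of arc length identifies $\int_{L_t} k_g\,ds$ (with the outward unit normal) as $\ell'(t)$. Combined with Gauss--Bonnet on $\Gamma_t$, this yields for regular $t \in (0,D)$ the differential identity
\[
\ell'(t) \;=\; 2\pi\chi(\Gamma_t) \;-\; \int_{\gamma_1} k_g\,ds \;-\; \int_{\Gamma_t} K\,dA.
\]
The Morse theory of $\phi$ guarantees that $\chi(\Gamma_t)$ changes at only finitely many critical values of $\phi$ and stays bounded by some $C(g)$, so the right-hand side is uniformly controlled by $C(g,C_1)$.

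\emph{Finishing the argument.} The coarea formula gives $\mathrm{Area}(\Gamma)\geq\int_0^D\ell(t)\,dt$, while the isoperimetric inequality for minimal surfaces (with constant depending on the topology) provides an a priori upper bound on $\mathrm{Area}(\Gamma)$ in terms of $\ell(\partial\Gamma)\leq 2C_1$ and $g$. Combining this with a uniform positive lower bound on $\ell(t)$---which follows from the fact that every $L_t$ must contain a simple closed curve separating $\gamma_1$ from $\gamma_2$, together with the total curvature bound $K_0$---produces $D\leq C_2(g,C_1)$.

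The principal obstacle is precisely the last step: extracting a uniform positive lower bound on $\ell(t)$. A priori the separating loop inside $L_t$ could collapse, and ruling this out rigorously requires the total curvature bound together with careful use of the isoperimetric inequality on the subdomains $\Gamma\setminus\Gamma_t$; this is essentially the geometric content of the ``no thin tubes'' conclusion.
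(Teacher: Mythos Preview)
The paper does not actually prove this lemma; it is merely recorded as Lemma~2.2 of \cite{BBg1Cpct} and cited from there. So there is no in-paper proof to compare against, and I will evaluate your proposal on its own terms.

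Your Gauss--Bonnet computation of the total curvature and the sweep-out identity
\[
\ell'(t)=2\pi\chi(\Gamma_t)-\int_{\gamma_1}k_g\,ds-\int_{\Gamma_t}K\,dA
\]
are correct and are the natural starting point; the argument in \cite{BBg1Cpct} almost certainly begins the same way. The assertion that $\chi(\Gamma_t)$ is bounded by some $C(g)$ needs more care than you give it---the number of components of $L_t$ is not obviously bounded by $g$ alone---but this is not the main issue.

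The genuine gap is in your ``finishing'' step, and you flag it yourself. You want to combine an isoperimetric area upper bound with a uniform lower bound $\ell(t)\geq c>0$, and then coarea gives $D\leq C_2$. But a uniform positive lower bound on $\ell(t)$ is exactly the statement that $\Gamma$ has no thin neck separating $\gamma_1$ from $\gamma_2$---which is the content of the lemma. Your final paragraph concedes this (``this is essentially the geometric content of the `no thin tubes' conclusion''), so as written the argument is circular. The proposed rescue via ``careful use of the isoperimetric inequality on the subdomains $\Gamma\setminus\Gamma_t$'' does not close the gap without further ideas: applying the isoperimetric inequality to $\Gamma\setminus\Gamma_t$ yields an \emph{area} upper bound in terms of $\ell(\gamma_2)+\ell(t)$, which in no way prevents $\ell(t)$ from being small. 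Nor can one simply invoke an intrinsic-ball area lower bound $\mathrm{Area}(\mathcal{B}_r)\geq\pi r^2$ from $K\leq 0$, since that comparison fails once $r$ exceeds the injectivity radius---and a short separating loop in $L_t$ is precisely a witness to small injectivity radius.

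In short, the scaffolding is right, but the missing step is not a technicality; it is the substance of the lemma. A complete argument has to exploit the differential inequality for $\ell$ (in particular the convexity $\ell''(t)=\int_{L_t}|K|\geq 0$ between critical levels, together with control on $\ell'(0^+)$ and on the total variation $\int\ell''=\int_\Gamma|K|\leq K_0$) directly, rather than routing through an unproved lower bound on $\ell(t)$.
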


Theorems \ref{GenusLossThm} and \ref{CpctnessCorg1} can now be
used together to show that once a sequence of surfaces has a
single collapsing handle (and thus unbounded curvature), then
there is a decomposition such that all handles in the sequence are
uniformly collapsing. This allows one to uniformly study the
geometry of the handles. As we will need this fact as a step in
the inductive proof of Theorem \ref{GenusLossThm}, we state and
prove it here.
\begin{prop} \label{GenusLossCor}
Suppose $\Sigma_i \in \mathcal{E}(1,g,R_i)$ ($g\geq 1$), $R_i\to
\infty$, $r_+(\Sigma_i)=1$, the genus of each $\Sigma_i$ is centered
at $0$, and $\sup_{B_1(0)\cap \Sigma_i} |A|^2 \to \infty$. Then, up
to passing to a sub-sequence and rotating $\Real^3$: There is a
$2\leq l \leq g$ and $l$ disjoint $k_j$-handles,
$\Gamma_i^{j,k_j}\subset \Sigma_i $, with $k_1+\ldots+k_l=g$ so that
the $\Gamma_i^{j,k_j}$ collapse uniformly at a rate $\lambda_i^j$ to
(not-necessarily distinct) points $p^j$ on the $x_3$-axis.
\end{prop}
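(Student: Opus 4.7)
The plan is to prove Proposition \ref{GenusLossCor} by induction on the genus $g$, invoking Theorem \ref{GenusLossThm} at each step. The base case $g = 1$ is vacuous: the conclusion demands $l \geq 2$, and by Theorem \ref{CpctnessCorg1} a genus-$1$ sequence with $r_+ = 1$ and genus centered at $0$ must have uniformly bounded curvature, contradicting the hypothesis $\sup |A|^2 \to \infty$.

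For the inductive step ($g \geq 2$), assume the proposition for all smaller genera and apply Theorem \ref{GenusLossThm} to $\Sigma_i$. After a sub-sequence and rotation of $\Real^3$, this yields concentration points $p_1, \ldots, p_{l_0}$ on the $x_3$-axis with $2 \leq l_0 \leq g$, radii $r_j > 0$ and $r_j^i \to 0$, and positive integers $g_j$ summing to $g$. By item (3) of the theorem, the genus-carrying component $\hat{\Gamma}_i^{j, g_j}$ of $B_{r_j}(p_j) \cap \Sigma_i$ has connected boundary and is a $g_j$-handle, with $r_+(\hat{\Gamma}_i^{j, g_j}) \leq r_j^i \to 0$. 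Set $\lambda_i^j := r_+(\hat{\Gamma}_i^{j, g_j})$, and after a further sub-sequence let $p_i^j \to p_j$ be corresponding genus centers. By Lemma \ref{LocTopProp} applied to $\hat{\Gamma}_i^{j, g_j}$, the ball $B_{2\lambda_i^j}(p_i^j)$ contains a unique $g_j$-genus component, which I call $\Gamma_i^{j, g_j}$; this is again a $g_j$-handle with connected boundary.

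Two cases arise. In Case (A), $\limsup_{i \to \infty} \lambda_i^j \sup_{\Gamma_i^{j, g_j}} |A| < \infty$; then Definition \ref{unifcollapsedef} is satisfied with $\Sigma_i' = \Gamma_i^{j, g_j}$, $\Sigma_i'' = \hat{\Gamma}_i^{j, g_j}$, rate $\lambda_i^j$, and limit $p_j$, so this handle collapses uniformly. In Case (B), $\lambda_i^j \sup |A| \to \infty$ along a sub-sequence; the rescaled, recentered sequence $\tilde{\Gamma}_i^j := (\lambda_i^j)^{-1}(\hat{\Gamma}_i^{j, g_j} - p_i^j)$ then lies in $\mathcal{E}(1, g_j, r_j/\lambda_i^j)$, has $r_+ = 1$ with genus centered at $0$, and has $\sup_{B_1 \cap \tilde{\Gamma}_i^j} |A|^2 \to \infty$. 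Thus $\tilde{\Gamma}_i^j$ satisfies the hypotheses of Proposition \ref{GenusLossCor} at genus $g_j < g$, and the inductive hypothesis yields a decomposition into uniformly collapsing handles; rescaling back converts this into a uniformly collapsing decomposition of $\hat{\Gamma}_i^{j, g_j}$ inside $\Sigma_i$. Combining contributions from all $j$ produces the required collection $\{\Gamma_i^{j, k_j}\}$ with $\sum k_j = g$ and $l \geq l_0 \geq 2$.

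The main technical obstacle is ensuring that the rotations used at different stages are compatible, so that all final concentration points lie on a single $x_3$-axis. Each $p_j$ from Theorem \ref{GenusLossThm} already lies on the original $x_3$-axis; in the inductive step, the lamination convergence of the rescaled sequence at $p_j$ is still to a foliation by horizontal planes (since we blow up at a point of the vertical singular axis where the local convergence was already to horizontal planes), with a vertical singular axis through the origin. Hence the rotation provided by the inductive application of Theorem \ref{GenusLossThm} can be chosen to fix the vertical direction, so the new concentration points remain on the $x_3$-axis. Rescaling, lamination compatibility, and genus additivity then combine to yield the proposition.
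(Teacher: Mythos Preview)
Your proof is correct and follows essentially the same inductive strategy as the paper: induct on $g$, apply Theorem~\ref{GenusLossThm} at genus $g$ to obtain concentration points $p_j$ with $g_j$-handles whose outer scale goes to zero, then at each $p_j$ rescale and observe a dichotomy---either the curvature is bounded at the new scale (your Case~(A), the paper's appeal to Theorem~\ref{CpctnessCorg1}), giving uniform collapse directly from Definition~\ref{unifcollapsedef}, or it blows up (your Case~(B)), and the inductive hypothesis at genus $g_j<g$ applies. Your treatment is in fact somewhat more explicit than the paper's: you verify the conditions of Definition~\ref{unifcollapsedef} directly in Case~(A), and you address the compatibility of the rotations arising at different inductive stages, a point the paper passes over in silence.
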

\begin{proof}
We proceed by induction on $g$.  For $g=1$ as $r_+(\Sigma_i)=1$,
Theorem \ref{CpctnessCorg1} implies the statement is vacuous.  For
$g=2$, Theorem \ref{GenusLossThm} implies there are two handles
collapsing, one at $(0,0,1)$ and one at $(0,0,-1)$. Rescaling about
each point and applying Theorem \ref{CpctnessCorg1} shows they are
uniformly collapsing. We now fix $g>1$ and assume the conclusion is
true for $g'<g$. Theorem \ref{GenusLossThm} gives points (not
necessarily distinct) $p_1, \ldots, p_m$, radii $r_1, \ldots, r_m$
and subsets $\Gamma_i^j\subset \Sigma_i$, $1\leq j \leq m$ so that
$\Gamma_i^j-p_j\in \mathcal{E}(1,k_j,r_j)$ and $r_+(\Gamma_i^j)\to
0$ as $i \to \infty$. Notice that because $r_+(\Sigma_i)=1$ one must
have $k_j<g$. At each $p_j$, an appropriate translation and
rescaling gives a sequence that either satisfies the above
hypotheses or Theorem \ref{CpctnessCorg1}. Thus, either the
induction hypothesis or direct application of Theorem
\ref{CpctnessCorg1} implies that all the handles collapsing at $p_j$
are uniformly collapsing. As this is true for all $j$, we've proven
the corollary.
\end{proof}
\subsection{The proof of Theorem
\ref{GenusLossThm}}\label{genuslosssec} We first note that the
no-mixing theorem of \cite{CM5} implies a weaker version of Theorem
\ref{GenusLossThm}.  Compare with Theorem 0.1 of \cite{CM3}:
\begin{prop}\label{WeakGenusLossThm}
Suppose $\Sigma_i \in \mathcal{E}(1,g,R_i)$ ($g\geq 1$) and $R_i\to
\infty$, $r_+(\Sigma_i)=1$, the genus of each $\Sigma_i$ is centered
at $0$, and $\sup_{B_1(0)\cap \Sigma_i} |A|^2 \to \infty$. Then up
to passing to a sub-sequence and rotating $\Real^3$:
\begin{enumerate}
 \item  The $\Sigma_i$ converge to the lamination $\mathcal{L}=\set{x_3=t}_{t\in
\Real}$ with singular set $\mathcal{S}$ a single line parallel to the $x_3$-axis in the sense of Theorem 0.9 of \cite{CM5}.
\item There is a number $1\leq l\leq g$ and $l$ distinct points
$p_1,\ldots, p_{l}$ on $\mathcal{S}$,  radii $r_j>0$ and sequences
$r_j^i\to 0$ so that the genus of $B_{r_j}(p_j)\cap \Sigma_i$,
$g_j$, is equal to the genus of $B_{r_j^i}(p_j)\cap \Sigma_i$ and
$g_1+\ldots +g_{l}\leq g$.
\item Each component of
$B_{r_j}(p_j)\cap \Sigma_i$ and of $B_{r_j^i}(p_j)\cap \Sigma_i$ has
connected boundary.
\item There is a $\delta_0>0$ so for any $0<\delta<\delta_0$, if
$B_{\delta}(y)\subset B_{R_i}\backslash  \cup_{j=1}^l
B_{r_j^i}(p_j)$, then each component of $B_{\delta}(y)\cap \Sigma_i$
is a disk.
\end{enumerate}
\end{prop}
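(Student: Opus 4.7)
The plan is to apply the Colding--Minicozzi lamination theorem (Theorem 0.9 of \cite{CM5}) as the principal engine. Since $\sup_{B_1(0)\cap\Sigma_i}|A|^2\to\infty$, after passing to a subsequence and rotating $\Real^3$ so that the relevant axis is the $x_3$-axis, the $\Sigma_i$ converge to a lamination $\mathcal{L}$ with nonempty singular set $\mathcal{S}$. Because $\Sigma_i\in \mathcal{E}(1,g,R_i)$ with $R_i\to\infty$ and connected boundary, and because the genus is centered at $0$ and $r_+(\Sigma_i)=1$, Lemma \ref{LocTopProp} guarantees that outside $B_1(0)$ the surfaces are locally disk-like, so Theorem 0.1 of \cite{CM4} applies on that region; this forces $\mathcal{L}$ to be a foliation by parallel planes and $\mathcal{S}$ to be a union of Lipschitz curves transverse to the leaves. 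Combining this with the no-mixing theorem and the containment of the neck-type behavior in $\bar{B}_1(0)$ (again via Lemma \ref{LocTopProp}), I expect $\mathcal{S}$ to be a single line parallel to the $x_3$-axis, yielding (1).

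Next I would locate the genus-concentration points. The genus of $B_r(x)\cap\Sigma_i$ is monotone in $r$ and bounded above by $g$, so for each $x\in\mathcal{S}$ one can define $g(x)=\lim_{r\to 0}\liminf_{i\to\infty}\mathrm{genus}(B_r(x)\cap\Sigma_i)$; only finitely many points can satisfy $g(x)>0$, and their $g(x)$-values sum to at most $g$. Let $p_1,\ldots,p_l$ be these points. For each $p_j$ pick a small fixed $r_j$ and a subsequence so that $\mathrm{genus}(B_{r_j}(p_j)\cap\Sigma_i)=g_j$ for all large $i$; by choosing $r_j^i\to 0$ slowly enough along a further diagonal sequence we can arrange $\mathrm{genus}(B_{r_j^i}(p_j)\cap\Sigma_i)=g_j$ as well, giving (2) with $g_1+\cdots+g_l\le g$.

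For (3), a component of $B_{r_j}(p_j)\cap\Sigma_i$ with disconnected boundary that carries genus would force, by Proposition \ref{homolbasisprop} and a Mayer--Vietoris argument, a non-separating closed curve in the complementary region, violating the planar-domain structure guaranteed outside the concentration balls. Shrinking $r_j$ if necessary removes any extraneous components so that every component of both $B_{r_j}(p_j)\cap\Sigma_i$ and $B_{r_j^i}(p_j)\cap\Sigma_i$ has connected boundary. For (4), I would argue by contradiction: if there is no uniform $\delta_0>0$ with the disk property outside $\cup_j B_{r_j^i}(p_j)$, one can extract a new sequence of points $y_i\in\Sigma_i$ at uniformly positive distance from all the $p_j$ but where the local topology is non-trivial on arbitrarily small scales; passing to a lamination limit at those points would produce a new singular point with neck-type behavior off the line $\mathcal{S}$, contradicting either the no-mixing theorem or the fact that $p_1,\ldots,p_l$ exhaust the genus.

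The hard part will be the uniform disk estimate in (4): the no-mixing theorem alone separates ulsc from neck-like points, but extracting a single $\delta_0$ valid for the whole sequence requires ruling out topology escaping to the union of shrinking balls at arbitrarily slow rates. The argument must combine a careful diagonal choice of the $r_j^i$ with the one-sided curvature estimate of \cite{CM4} to control the geometry of $\Sigma_i$ away from the genus-concentration points, so that every small ball disjoint from $\cup_j B_{r_j^i}(p_j)$ meets $\Sigma_i$ only in pieces inherited from the ulsc regime on $\mathcal{S}$.
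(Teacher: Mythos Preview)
Your overall strategy matches the paper's: apply the Colding--Minicozzi lamination theory, use the no-mixing theorem to control the singular set, locate finitely many genus-concentration points, and argue (4) by contradiction. However, you miss one organizing observation that streamlines (3) and (4) considerably and makes the one-sided curvature estimate unnecessary.

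The paper's first step is to show, once and for all, that the sequence is \emph{ULSC}, i.e.\ $\mathcal{S}_{neck}=\emptyset$. This is a consequence of no-mixing together with $r_+(\Sigma_i)=1$: by Lemma \ref{LocTopProp} the surfaces are simply connected in small balls disjoint from $\bar B_1(0)$, so there are points of $\mathcal{S}_{ulsc}$, and no-mixing forbids any coexisting neck points. With ULSC in hand, both (3) and (4) reduce to a single mechanism: any hypothetical violation produces a point of $\mathcal{S}_{neck}$, contradicting ULSC. For (3), if for every $r'<r_j$ some component of $B_{r'}(p_j)\cap\Sigma_i$ had disconnected boundary while the genus remained $g_j$, this is exactly the definition of $p_j\in\mathcal{S}_{neck}$; so after shrinking $r_j$ the connected-boundary claim holds. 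Your Mayer--Vietoris sketch is not needed, and as written it appeals to a ``planar-domain structure outside the concentration balls'' that you have not yet established (that is essentially (4)).

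For (4), the paper's contradiction is again purely topological. If no uniform $\delta_0$ exists, take $y_k$ with $\rho_k\to 0$ and a non-disk component of $B_{\rho_k}(y_k)\cap\Sigma_k$; Lemma \ref{LocTopProp} forces $y_k\in B_2$, and the connected-boundary structure from (3), applied inside each $B_{r_j}(p_j)$, forces $y_k\notin\cup_j B_{r_j/2}(p_j)$. Hence $y_k\to y_\infty$ with $y_\infty\neq p_j$, so the non-disk components carry no genus and must have disconnected boundary, giving $y_\infty\in\mathcal{S}_{neck}$ --- contradiction. No diagonal extraction subtleties and no one-sided curvature estimate are required here; you have overestimated the difficulty of this step.
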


\begin{proof}
The no-mixing theorem of \cite{CM5} and the fact that
$r_+(\Sigma_i)=1$ imply that the sequence of $\Sigma_i$ is ULSC; for
the details we refer to Lemma 3.5 of \cite{BBg1Cpct}. Theorem 0.9 of
\cite{CM5} and Proposition 2.1 of \cite{BBg1Cpct} imply that up to
passing to a sub-sequence and rotating $\Real^3$, the $\Sigma_i$
converge to the claimed singular lamination --  see Remark A.4 of
\cite{BBg1Cpct}.

Lemma I.0.14 of \cite{CM3} implies that, up to passing to a
further sub-sequence, there are $l\leq g$ points $p_1,\ldots, p_l$
(fixed in $\Real^3$) so that $r_-(\Sigma_i, p_j)\to 0$ whereas for
any other point $x\in \Real^3$, $\liminf_{i\to \infty}
r_-(\Sigma_i,x)>0$. Notice that $l\geq 1$, as otherwise
$r_-(\Sigma_i)\geq \alpha>0$ for some $\alpha$ and so by Theorem
\ref{CpctnessCorg1} a sub-sequence of the $\Sigma_i$ would have
uniformly bounded curvature.  Thus, it remains to show that one
can find $r_j, r_j^i$ and $\delta_0$ with the claimed properties.

By the definition of ULSC sequences, for each $p_j$ there is a
radius $0<r_j<1$ and radii $r_j^i\to 0$ so that $B_{r_j}(p_j)\cap
{\Sigma}_i$ has the same genus, $g_i^j$, as $B_{r_j^i}(p_j) \cap
{\Sigma}_i$ and the boundary of each component of  $B_{r_j^i}(p_j)
\cap {\Sigma}_i$ is connected. We claim that there exists $r_j{}'
\leq r_j$ so that, after possibly passing to a sub-sequence, each
component of $B_{r_j{}'} (p_j)\cap {\Sigma}_i$ also has connected
boundary. Indeed, if this was not the case then one could find
$\tilde{r}^i_j\in (r_j^i,r_j)$ with $\tilde{r}^i_j\to 0$ and some
component of $B_{\tilde{r}^i_j}(p_j)\cap {\Sigma}_i$ having
disconnected boundary. But notice the genus of
$B_{\tilde{r}^i_j}(p_j)\cap {\Sigma}_i$ is equal to the genus of
$B_{r_j}(p_j)\cap {\Sigma}_i$. By definition, this would imply $p_j
\in \mathcal{S}_{neck}$, contradicting the no-mixing theorem. Now,
redefine $r_j$ so $r_j{}'=r_j$.

Now suppose there was no such $\delta_0$. Then there would exist a
sequence of points $y_{k}$, radii $\rho_k\to 0$, and $\Sigma_{i_k}$
so that $B_{\rho_k} (y_k)\subset B_{R_{i_k}}\backslash
\cup_{j=1}^{l} B_{r_j^{i_k}}(p_j)$, but one component of
$B_{\rho_k}(y_k)\cap \Sigma_{i_k}$ was not a disk.  By throwing out
a finite number of these we may assume $\rho_k\leq\frac{1}{2}
\min\set{1, r_1,\ldots, r_l}$. Notice that as each $\Sigma_i$ is
smooth and $i_k\to \infty$, by passing to a sub-sequence and
relabeling we may replace the $\Sigma_{i_k}$ by $\Sigma_k$. Lemma
\ref{LocTopProp} and the fact that $r_+(\Sigma_{k})=1$ imply $y_k\in
B_2$. Passing to a sub-sequence, $y_k\to y_\infty \in B_2$.
Similarly, because each component, $\Gamma$, of $B_{r_j}\cap
(\Sigma_{k}-p_j)$ is either a disk or an element of
$\mathcal{E}(1,g_{j}, r_j)$ with genus lying in $B_{r_j^k}$, Lemma
\ref{LocTopProp} and the hypothesis imply that $y_k \notin \cup_j
B_{r_j/2}(p_j)$ ($1\leq j\leq l$). As the genus only concentrates at
$p_1,\ldots, p_l$, $\Sigma_k\cap B_{\rho_k}(y_k)$ must have a
component with disconnected boundary, implying $y_\infty\in
\mathcal{S}_{neck}$. This contradicts the no-mixing theorem of
\cite{CM5}.
\end{proof}
\begin{cor}\label{CollapseHandleCor}
Suppose $\Sigma_i \in \mathcal{E}(1,g,R_i)$ ($g\geq 1$), $R_i\to
\infty$, $r_+(\Sigma_i)=1$, the genus of each $\Sigma_i$ is centered
at $0$ and $\sup_{B_1(0)\cap \Sigma_i} |A|^2 \to \infty$. Then, up
to passing to a sub-sequence, there exist $1\leq g' \leq g$,
$\delta_0>0$, and a handle decomposition $\Sigma_i^k \subset
\Sigma_i\cap B_2(0)$ with $1\leq k \leq g$ so that:
\begin{enumerate}
 \item For $1 \leq j \leq g'$, there
are points $p_j$ and radii $r_j^i\to 0$ so that $\Sigma_i^j\subset
B_{r_j^i}(p_j)$.
\item For $j>g'$, no non-contractible closed curve in $\Sigma_i^j$ lies in any
$B_{\delta_0}(y)$.
\end{enumerate}
\end{cor}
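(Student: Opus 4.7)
The plan is to apply Proposition \ref{WeakGenusLossThm} and upgrade its conclusions into a concrete handle decomposition. After passing to a sub-sequence and rotating, Proposition \ref{WeakGenusLossThm} yields concentration points $p_1,\ldots,p_l$ on the $x_3$-axis (with $l\ge 1$, since $l=0$ would give $r_-(\Sigma_i)$ bounded away from zero and hence bounded curvature by Theorem \ref{CpctnessCorg1}), radii $r_j$ and $r_j^i\to 0$, genera $g_j$ with $g_1+\cdots+g_l\le g$, and a scale $\delta_0>0$ so that small balls disjoint from $\bigcup_j B_{r_j^i}(p_j)$ meet $\Sigma_i$ only in disks. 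Set $g'=g_1+\cdots+g_l\in\{1,\ldots,g\}$.

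For each $j$, use property (3) of Proposition \ref{WeakGenusLossThm} -- the genus-$g_j$ component of $B_{r_j^i}(p_j)\cap\Sigma_i$ has connected boundary -- and apply the handle decomposition following Proposition \ref{homolbasisprop} to produce $g_j$ disjoint $1$-handles inside $B_{r_j^i}(p_j)$. Concatenating and relabeling yields the first $g'$ handles $\Sigma_i^1,\ldots,\Sigma_i^{g'}$, giving property (1). Since the genus of the main component of $\Sigma_i\cap B_2(0)$ equals $g$ (by Lemma \ref{LocTopProp} and $r_+(\Sigma_i)=1$), we extend to a full handle decomposition of this component; because no further genus is concentrated outside $\bigcup_j B_{r_j^i}(p_j)$ (by the genus-equality in Proposition \ref{WeakGenusLossThm}(2)), the remaining $g-g'$ handles $\Sigma_i^{g'+1},\ldots,\Sigma_i^g$ can be arranged disjoint from each $B_{r_j^i}(p_j)$.

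To verify property (2), argue by contradiction: suppose a non-contractible simple closed curve $\gamma\subset\Sigma_i^j$ for some $j>g'$ lies in a ball $B_{\delta_0}(y)$; we may shrink $\delta_0$ so that $\delta_0$-balls around the distinct $p_{j'}$ are pairwise disjoint. Either $B_{\delta_0}(y)\cap\bigcup_{j'}B_{r_{j'}^i}(p_{j'})=\emptyset$, in which case property (4) of Proposition \ref{WeakGenusLossThm} forces every component of $B_{\delta_0}(y)\cap\Sigma_i$ to be a disk, or $B_{\delta_0}(y)$ meets exactly one such ball $B_{r_{j_0}^i}(p_{j_0})$ and lies inside $B_{r_{j_0}}(p_{j_0})$, where the genus-equality property renders the shell $B_{r_{j_0}}(p_{j_0})\setminus B_{r_{j_0}^i}(p_{j_0})$ a planar domain on $\Sigma_i$. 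In either case, if $\gamma$ represents a non-trivial class in $H_1(\Sigma_i^j)$ (hence in $H_1(\Sigma_i)$ via the homology basis), the only non-trivial class available in $B_{\delta_0}(y)\cap\Sigma_i$ is represented by a loop around the inner genus region, which is null-homologous in $\Sigma_i$: a contradiction. The boundary-parallel subcase -- where $\gamma$ is trivial in $H_1(\Sigma_i^j)$ but non-trivial in $\pi_1(\Sigma_i^j)$ -- is excluded by choosing the non-collapsing handles so that $\partial\Sigma_i^j$ itself does not fit in any small ball, and then invoking Lemma \ref{NoThinTubesLem} to prevent the cobounding annulus from being long and thin.

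The main obstacle is the boundary-parallel subcase: one must arrange the topological choice of non-collapsing handles to be compatible with the extrinsic lamination structure so that no essential closed curve in $\Sigma_i^j$, even one trivial in homology, can be squeezed into a small extrinsic ball. The remaining topological bookkeeping is routine.
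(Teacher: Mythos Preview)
The paper states this as a corollary of Proposition~\ref{WeakGenusLossThm} with no proof, and your derivation---placing the collapsing handles inside the $B_{r_j^i}(p_j)$, extending to a full handle decomposition of the genus-$g$ component of $\Sigma_i\cap B_2$, and checking (2) via the disk/connected-boundary structure of Proposition~\ref{WeakGenusLossThm}---is exactly the intended argument.

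The gap you flag in the boundary-parallel subcase is real, but your proposed fix through Lemma~\ref{NoThinTubesLem} does not work: that lemma needs control on $\int_\gamma(1+|k_g|)$, which an arbitrary essential curve need not have, and you have not explained how to force $\partial\Sigma_i^j$ itself to be extrinsically large. The correct resolution is simply to rerun the separating argument you already used for the homologically nontrivial case. If $\gamma$ is homotopic to $\partial\Sigma_i^j$ and lies in the relevant component $\Omega$ (of $B_{\delta_0}(y)\cap\Sigma_i$ in the disk case, or of $B_{r_{j_0}}(p_{j_0})\cap\Sigma_i$ otherwise), then, because $\Sigma_i^j$ was constructed disjoint from the small balls, $\gamma$ is again disjoint from a homology basis of $\Omega$ and hence, by Proposition~\ref{homolbasisprop}, separates $\Omega$; the side $S\subset\Omega$ not meeting $\partial\Omega$ is a compact subsurface of $\Sigma_i$ with $\partial S=\gamma$. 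But $\gamma$ already separates $\Sigma_i$ into the inner once-punctured torus $T\subset\Sigma_i^j$ and its complement $R\ni\partial\Sigma_i$, and $S$ must coincide with one of these. It is not $R$, since $\partial\Sigma_i\not\subset\Omega$. It is not $T$, since then the basis curves $\eta_j,\eta_{j+g}\subset T\subset\Omega$, being disjoint from the homology basis of $\Omega$, would themselves separate $\Omega$ and hence be null-homologous in $\Sigma_i$, contradicting their role in the handle decomposition. Thus no special extrinsic choice of $\partial\Sigma_i^j$ is needed, and Lemma~\ref{NoThinTubesLem} plays no role here.
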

\begin{rem}
We refer to the $\Sigma_i^j$ for $1\leq j\leq g'$ as
\emph{collapsing handles} and to the $\Sigma_i^j$ for
$g'<k $ as \emph{non-collapsing handles}. Notice, points $p_j$
need not be distinct. Also, if $g'=g$ there are no non-collapsing handles.
\end{rem}

The main obstacle to proving Theorem \ref{GenusLossThm} is the
possible existence of non-collapsing handles in the sequence.  If
there is a non-collapsing handle, then the chord-arc bounds of
\cite{CY} give geodesic lassos (geodesics away from one point) with
uniform upper and lower bounds on their length. As in the proof of
Theorem 1.4 in \cite{BBg1Cpct} this will lead to a contradiction;
however there are several subtleties. One of these is the need to
find the correct closed geodesics. Because the injectivity radius
collapses at some points, one must be careful in the selection.
Ideally, one would choose a closed geodesic that was part of the
homology basis of a non-collapsing handle, and was a minimizer in
its homology class. However, one does not a priori have the
existence of such a sequence lying in a fixed extrinsic ball.
Nevertheless, if such a pathology occurs, then there is a different
sequence of closed geodesics with acceptable properties. This is the
content of the following lemma:

\begin{lem}\label{ClosedGeoLem} Let $\Sigma_i \in \mathcal{E}(1,g,R_i)$ ($g\geq 1$) be as in Corollary \ref{CollapseHandleCor} with collapsing
handles $\Sigma_i^1, \ldots, \Sigma_i^{g'}$ and non-collapsing
handles $\Sigma_i^{g'+1},\ldots, \Sigma_i^g$, $1\leq g'<g$. Suppose,
in addition, that every collapsing handle is a subset of some
uniformly collapsing $k_j$-handle $\Gamma_i^{j,k_j}$, $1 \leq j\leq
l$, which collapse to points $p_j$.  Then, up to passing to a
sub-sequence, there exist $0< r_0\leq R_0<\infty$ and closed
geodesics $\gamma_i\subset \Sigma_i\cap B_{R_0}$ with $\gamma_i
\not\subseteq \cup_j B_{r_0}(p_j)$ so that either:
\begin{enumerate}
 \item \label{Case1ClosedGeoLem}
For $1\leq j\leq l$, $\dist_{\Sigma_i} (\gamma_i, \Gamma_i^{j,k_j})
\to \infty$; or
\item  \label{Case2ClosedGeoLem} the $\gamma_i$ minimize in their homology
class, $[\gamma_i]$, a generator of $H_1(\Sigma_i^g)$.
\end{enumerate}
\end{lem}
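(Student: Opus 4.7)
The plan is to try to construct $\gamma_i$ as a length-minimizing closed geodesic in the free homotopy class of a generator of $H_1(\Sigma_i^g)$ (the non-collapsing handle available since $g'<g$), and to fall back to a different geodesic satisfying Case 1 only when this direct minimization degenerates. By Proposition \ref{homolbasisprop}, fix for each $i$ a simple closed curve $\alpha_i \subset \Sigma_i^g$ representing a generator of $H_1(\Sigma_i^g)$; the non-collapsing hypothesis of Corollary \ref{CollapseHandleCor}(2) forces the extrinsic diameter of $\alpha_i$, and hence $\ell(\alpha_i)$, to be at least $2\delta_0$.

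To execute Case 2, I would follow the approach of the last paragraph of the proof of Lemma \ref{UniformGeomLem}: first produce a representative $\alpha_i' \subset \Sigma_i^g$ of $[\alpha_i]$ with $\ell(\alpha_i')+\int_{\alpha_i'}|k_g|$ uniformly bounded, and then apply Lemma \ref{NoThinTubesLem} together with direct methods to extract a closed geodesic minimizer $\gamma_i$ in the free homotopy class $[\alpha_i']$ with $\dist_{\Sigma_i}(\gamma_i, \alpha_i') \leq C$ uniformly in $i$. Since $\alpha_i' \subset \Sigma_i^g \subset B_2(0)$ and $\gamma_i$ is uniformly intrinsically close to $\alpha_i'$, this places $\gamma_i \subset B_{R_0}(0)$ for a uniform $R_0$. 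To verify the remaining condition $\gamma_i \not\subseteq \cup_j B_{r_0}(p_j)$: for $r_0$ sufficiently small and $i$ large, Proposition \ref{WeakGenusLossThm}(4) together with the containment $\Gamma_i^{j,k_j} \subset B_{r_0}(p_j)$ forces every loop in $\cup_j B_{r_0}(p_j) \cap \Sigma_i$ to carry a homology class in the summand $\oplus_j H_1(\Gamma_i^{j,k_j})$ of $H_1(\Sigma_i)$, which is complementary to the summand containing $[\alpha_i]$. Thus $\gamma_i$ cannot be entirely contained in $\cup_j B_{r_0}(p_j)$.

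The main obstacle, and the reason Case 1 appears in the statement, is that no such uniformly nice representative $\alpha_i'$ need exist: the non-collapsing handle $\Sigma_i^g$ could be intrinsically stretched so that every representative of $[\alpha_i]$ has length tending to infinity or passes arbitrarily close intrinsically to some $\Gamma_i^{j,k_j}$. In this regime, after passing to a subsequence one has $\dist_{\Sigma_i}(\gamma_i', \Gamma_i^{j,k_j}) \to \infty$ for every reasonable short representative $\gamma_i'$, and I would instead take $\gamma_i$ to be a length-minimizing closed geodesic in a non-contractible homotopy class lying intrinsically far from the $\Gamma_i^{j,k_j}$. The failure of the Case 2 strategy then translates directly into $\dist_{\Sigma_i}(\gamma_i, \Gamma_i^{j,k_j}) \to \infty$, i.e.\ Case 1, and the homological argument above once again excludes $\gamma_i \subseteq \cup_j B_{r_0}(p_j)$. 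The delicate technical point is to identify the correct homotopy class and verify that, after a single subsequence extraction, exactly one of the two alternatives holds uniformly in $i$; this is where the selection problem alluded to in the paragraph preceding the lemma statement must be resolved.
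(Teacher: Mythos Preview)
Your overall architecture---attempt Case~(2) first and fall back to Case~(1) if the minimization degenerates---is reasonable in spirit, and for Case~(2) your homological exclusion of $\gamma_i\subseteq\cup_j B_{r_0}(p_j)$ is essentially what the paper does. However, there are two genuine gaps, both concentrated in your treatment of Case~(1).

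\textbf{Existence of the Case~(1) geodesic.} You assert that when no uniformly nice representative of $[\alpha_i]$ exists, you would ``take $\gamma_i$ to be a length-minimizing closed geodesic in a non-contractible homotopy class lying intrinsically far from the $\Gamma_i^{j,k_j}$.'' But you never explain why such a class admits a representative of \emph{uniformly bounded length}, which is needed to keep $\gamma_i\subset B_{R_0}$. The paper supplies this via the chord-arc bounds of \cite{CY}: every point $p\in\Sigma_i^g$ has $\inj_{\Sigma_i}(p)\leq 2\Delta_0$, hence carries a geodesic lasso of length at most $4\Delta_0$; then Lemma~\ref{NoThinTubesLem} turns this into a closed geodesic within bounded intrinsic distance of $p$. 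The dichotomy is then concrete: either some $p_i\in\Sigma_i^g$ satisfy $\dist_{\Sigma_i}(p_i,\cup_j\Gamma_i^{j,k_j})\to\infty$ (and the lasso at $p_i$ gives Case~(1)), or all points of $\Sigma_i^g$ are within bounded intrinsic distance of $\cup_j\Gamma_i^{j,k_j}$, whence $\Sigma_i^g$ is covered by $l$ intrinsic balls of bounded radius and a homology-basis curve of bounded length exists (Case~(2)). Your proposal lacks this mechanism entirely; without the injectivity-radius bound you have no control on the length of any Case~(1) candidate.

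\textbf{Non-containment in Case~(1).} Your homological argument---that loops in $\cup_j B_{r_0}(p_j)\cap\Sigma_i$ land in the summand $\oplus_j H_1(\Gamma_i^{j,k_j})$ complementary to $[\alpha_i]$---only excludes $\gamma_i\subseteq\cup_j B_{r_0}(p_j)$ when $\gamma_i$ represents $[\alpha_i]$. In Case~(1) your $\gamma_i$ has no specified homology class, so this argument does not apply. The paper handles Case~(1) differently: if $\gamma_i$ lay in some component $\Omega_i$ of $B_{r_0}(p_j)\cap\Sigma_i$, then since $\gamma_i$ is intrinsically far from the topology of $\Omega_i$ one can choose a homology basis of $\Omega_i$ disjoint from $\gamma_i$, forcing $\gamma_i$ to be separating; but then the bounded piece of $\Omega_i\backslash\gamma_i$ contains some uniformly collapsing handle, and Lemma~\ref{NoThinTubesLem} together with Lemma~\ref{UniformGeomCor} bounds $\dist_{\Sigma_i}(\gamma_i,\Gamma_i^{j,k_j})$, contradicting Case~(1).
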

\begin{proof}
By the chord-arc bounds of \cite{CY}, for every point $p\in
\Sigma_i^{g}$, $\inj_{\Sigma_i} (p) \leq 2\Delta_0$ and thus there
is a geodesic lasso, $\gamma'_{i,p}$, of length $4\Delta_0$ through
$p$ -- see Lemma 3.6 of \cite{BBg1Cpct}.  Using Lemma
\ref{NoThinTubesLem}, a direct argument gives a closed geodesic,
$\gamma_{i,p}$, in $\Sigma_i$ homotopic to $\gamma'_{i,p}$ and with
$\dist_{\Sigma_i} (\gamma_{i,p}', \gamma_{i,p})\leq C$ where
$C=C(\Delta_0)$. Thus, $\gamma_{i,p}\subset
\mathcal{B}^{\Sigma_i}_{C+8\Delta_0}(p)$.  As a consequence, if
there is a sequence of points $p_i\in \Sigma_i^g$ so that
$\dist_{\Sigma_i}(p_i, \Gamma_i^{j,k_j}) \geq C+8\Delta_0+d_i$ where
$d_i\to \infty$ then setting $R_0=2+4\Delta_0$ and
$\gamma_i=\gamma_{i,p_i}$, we see that Case
\eqref{Case1ClosedGeoLem} is satisfied.

On the other hand, if one cannot find such a sequence $p_i$, then
after passing to a sub-sequence, one has that $\limsup_{i\to \infty}
\dist_{\Sigma_i} (x, \cup_{j=1}^{l} \Gamma_i^{j,k_j}) =C'<\infty$
for all $x \in \Sigma_i^g$.  By Lemma \ref{UniformGeomLem}, there is
a value $D<\infty$ bounding the diameter of each $\Gamma_i^{j,k_j}$.
Thus, for $i$ sufficiently large, there are points $q_i^1, \ldots,
q_i^{l}$ so that $\Sigma_i^g\subset\cup_{j=1}^{ l}
\mathcal{B}^{\Sigma_i}_{D'} (q_i^j)$ where $D'=C'+D$.  As a
consequence, there is a closed, embedded, non-contractible curve,
$\gamma_i'$, in $\Sigma_i^g$, forming part of a homology basis of
$\Sigma_i^g$ and whose length is less than $2 l D'$. The length
bound and the fact that $r_+(\Sigma_i)=1$ implies that $\gamma_i'$
lies in $B_{1+2l D'}(0)$, as does any homologous curve of equal or
smaller length. We now minimize length in $[\gamma_i']$ and obtain
$\gamma_i$. With $R_0=1+2l D'$, these curves satisfy Case
\eqref{Case2ClosedGeoLem}.

Finally, we verify that $\gamma_i
\not\subseteq\cup_{j}B_{r_0}(p_j)$.  To that end, fix $r_0$ so that
$r_0\leq \frac{1}{2}\min\set{\delta_0,r_1,\ldots, r_l}$
where
the $\delta_0$ and the $r_l$ are given by Theorem \ref{WeakGenusLossThm}.
 Thus, the balls $B_{r_0}(p_j)$ are pair-wise disjoint and so
it suffices to show $\gamma_i\not\subseteq B_{r_0}(p_j)$. Suppose
$\Omega_i$ was the component of $B_{r_0}(p_j)\cap \Sigma_i$
containing $\gamma_i$.  As $\Omega_i$ has non-positive curvature and
$\gamma_i$ is a closed geodesic, $\Omega_i$ cannot be a disk.
However, by the choice of $r_0$ it does have connected boundary, and
so we may take it to be a $k$-handle where $1\leq k <g$. We claim
that if the $\gamma_i$ satisfy either Case \eqref{Case1ClosedGeoLem}
or Case \eqref{Case2ClosedGeoLem}, then they must separate
$\Omega_i$ and thus $\Sigma_i$ as well. Indeed, it is clear in
either case that one can choose a homology basis of $\Omega_i$,
$\sigma_i^1, \ldots, \sigma_i^{2k}$, disjoint from $\gamma_i$.  In
Case \eqref{Case1ClosedGeoLem} this is because the $\gamma_i$ are
far from the topology of the $\Omega_i$ whereas in Case
\eqref{Case2ClosedGeoLem} this is a purely topological fact.  Thus,
$\gamma_i \subset \Omega_i \backslash \cup_j \sigma_i^j$ and so is
separating. For Case
\eqref{Case2ClosedGeoLem}, this contradicts $\gamma_i$ being part of a homology basis.

Thus, we deal only with Case \eqref{Case1ClosedGeoLem}. Replace
$\Omega_i$ by the component of $\Omega_i\backslash \gamma_i$
disjoint from the boundary.  As $r_0 < \delta_0$, all the handles of
$\Omega_i$ lie within uniformly collapsing $k$-handles. Thus, there
is at least one uniformly collapsing handle $\Gamma_i^{j,k_j}\subset
\Omega_i$. Using, $\Gamma_i^{j,k_j}$, let $\gamma_i'{}'$ be the
closed geodesic given by Lemma \ref{UniformGeomCor}.  Clearly, for
$i$ sufficiently large, $\gamma_i$ and $\gamma_i'{}'$ are disjoint.
Thus, the component of $\Omega_i\backslash \gamma_i'{}'$ that meets
$\gamma_i$ satisfies the hypotheses of Lemma \ref{NoThinTubesLem}.
This implies that there is an upper bound on the distance between
$\gamma_i$ and $\gamma_i'{}'$ and hence an upper bound on the
distance between $\gamma_i$ and $\Gamma_i^{j,k_j}$ which is a
contradiction.
\end{proof}

We now prove Theorem \ref{GenusLossThm}. We will proceed by
induction on the genus; in doing so we must treat the two cases of
Lemma \ref{ClosedGeoLem} separately.
\begin{proof}(Theorem \ref{GenusLossThm}): Note that if $g=1$ then the
theorem is vacuously true by Theorem \ref{CpctnessCorg1}.  If $g=2$
then by passing to a sub-sequence Proposition \ref{WeakGenusLossThm}
implies that either only one handle collapses at a point $p_1\in
\mathcal{S}$ or two different handles collapse at $(0,0,\pm 1)$. Any
other possibility is not compatible with $r_+(\Sigma_i)=1$. In the
latter case, the theorem follows easily and so we treat only the
former case. A rescaling and Theorem \ref{CpctnessCorg1} imply the
collapsing handle is, after passing to a sub-sequence, uniformly
collapsing.  Thus, Lemma \ref{ClosedGeoLem} gives a sequence of
closed geodesics, $\gamma_i$ in $\Sigma_i$ with uniform upper (and
lower) bounds on their length. Moreover, $\gamma_i\not\subseteq
B_{r_0}(p_1)$, where $r_0$ is given by the lemma.

Up to passing to a sub-sequence, Lemma 2.4 of \cite{BBg1Cpct}
guarantees that the $\gamma_i$ converge, in a Hausdorff sense, to a
bounded closed sub-interval of $\mathcal{S}$. By Proposition
\ref{WeakGenusLossThm}, as $\gamma_i \not\subseteq B_{r_0}(p_1)$,
this interval has positive length and at least one endpoint
$q_\infty$ of the interval is not in $B_{r_0/2}(p_{1})$. By a
reflection, we may assume it is the bottom endpoint. For
$\delta<\frac{1}{4}r_0 \leq \frac{1}{8}\delta_0$  ($\delta_0$ from
Proposition \ref{WeakGenusLossThm}) and $i$ sufficiently large, each
component of $B_\delta(q_\infty)\cap \Sigma_i$ is simply connected.
Thus, the argument of Lemma 2.5 of \cite{BBg1Cpct} can be applied
without change to give a contradiction.

We now assume that Theorem \ref{GenusLossThm} holds for all $g'<g$;
in particular, Proposition \ref{GenusLossCor} holds for all $g'<g$.
By Proposition \ref{WeakGenusLossThm} there are points $p_1,\ldots,
p_l$ at which the genus concentrates and a scale $\delta_0$ so that
the $\Sigma_i$ are, away from the $p_j$, uniformly disks on scales
smaller than $\delta_0$. Label the collapsing handles $\Sigma_i^1,
\ldots, \Sigma_i^{g'}$. We assume $1\leq g'<g$ as otherwise the
theorem follows easily.  We claim that each collapsing handle
can be chosen to belong to a uniformly collapsing $k_j$-handle $\Gamma_i^{j,k_j}$.
Indeed, Proposition \ref{WeakGenusLossThm}, implies that each
collapsing handle lies in a $\tilde{k}_j$-handle
$\tilde{\Gamma}_i^{j,\tilde{k}_j}$ that, after a translation, lies
in $\mathcal{E}(1,\tilde{k}_j,r)$ for some $r>0$ and which has
$r_+(\tilde{\Gamma}_i^{j,\tilde{k}_j})\to 0$. Thus, after rescaling, we see that it satisfies either the
hypotheses of Theorem \ref{GenusLossThm} or Theorem
\ref{CpctnessCorg1}.  In the latter case, the handle is itself
uniformly collapsing, while in the former, as $\tilde{k}_j<g$,
Proposition \ref{GenusLossCor} decomposes $\tilde{\Gamma}_i^{j,\tilde{k}_j}$ into
uniformly collapsing handles.

Appealing to Lemma \ref{ClosedGeoLem}, since some handle is not
collapsing, we are guaranteed the existence of a closed geodesic
$\gamma_i$ of uniformly bounded length. Again, Lemma 2.4 of
\cite{BBg1Cpct} implies that, up to passing to a sub-sequence, the
$\gamma_i$ converge in a Hausdorff sense to a bounded closed
sub-interval of $\mathcal{S}$ of positive length. Clearly, if one of
the endpoints of this interval was not in the set
$\set{p_1,\ldots,p_l}$, Proposition \ref{WeakGenusLossThm} gives a
uniform scale near the endpoint on which $\Sigma_i$ would be simply
connected; as above this would give a contradiction. Thus, up to
relabeling, we may take the endpoints of the interval of convergence
to be $p_1$ and $p_2$. We must now deal with the two cases of Lemma
\ref{ClosedGeoLem} separately.

\vskip .05in \noindent Case \eqref{Case1ClosedGeoLem}:

Suppose the $\gamma_i$ are intrinsically far from the collapsing
handles.  We claim that as long as $i$ is sufficiently large, every
point $q\in \gamma_i$ has $\inj_{\Sigma_i} (q)\geq
\frac{1}{4}\delta_0$.  Note that for $i$ sufficiently large we have
that $\dist_{\Sigma_i} (\gamma_i, \cup_j \Gamma_i^{j,k_j})\geq
2\delta_0$. Suppose there exists $q\in\gamma_i$ with
$\alpha_{i,q}=\inj_{\Sigma_i}(q)<\frac{1}{4} \delta_0$; then there
exists a geodesic lasso $\gamma_{i,q}$ through
$q$ with length $2\alpha_{i,q}$. One of the points where topology
collapses, $p_j$, must lie in $B_{\delta_0/2} (q)$ as otherwise for
$i$ very large the component of $B_{\delta_0/2} (q)\cap \Sigma_i$
containing $\gamma_{i,q}$ a disk. Thus, $\gamma_{i,q}\subset
\Omega_{i,q}$, a component of $B_{\delta_0} (p_{j})\cap \Sigma_i$.
By Corollary \ref{CollapseHandleCor}, $\gamma_{i,q}$ cannot be contained in a non-collapsing handle. Since $\gamma_{i,q}$ is
non-contractible and intrinsically near $q$, while $q$ is far from
the uniformly collapsing handles $\Gamma_i^{j,k_j}$, it must be separating.
This is impossible, to see this, replace $\Omega_{i,q}$ by the component of $\Omega_{i,q}\backslash
\gamma_{i,q}$ with connected boundary. Then $\Omega_{i,q}$ must contain some
uniformly collapsing $k$-handle, but if this occurs then Lemma
\ref{NoThinTubesLem} and Corollary \ref{UniformGeomCor} contradict
$\dist_{\Sigma_i}(q,\cup_j \Gamma_i^{j,k_j})\to \infty$, verifying the
claim.

As a consequence, by the weak chord-arc bounds of \cite{CY}, there
is a $\delta \in (0,\delta_0)$ so that, for $i$ sufficiently large,
for any $q\in \gamma_i$ the component of $B_{\delta}(q)\cap
\Sigma_i$ containing $q$ is a disk.  Now pick $q_i\in \gamma_i$ to
be the lowest point of $\gamma_i$ (i.e. $x_3(q_i)=\min_{q \in
\gamma_i} x_3(q)$).  Clearly, $q_i\to p_1$ the bottom point of the limit interval of the
$\gamma_i$.  As a
consequence, for any $\epsilon>0$ there is an $i_\epsilon$ large so
that for $i>i_\epsilon$, $B_{\delta/2}(q_\infty)\cap \Sigma_i$ has
at least two components, one non-simply connected and one containing $q_i$, that meet
$B_{\epsilon}(q_\infty)$.  By the maximum principle, and the above the component
containing $q_i$ is a disk. The one-sided curvature bounds of
\cite{CM4} imply that, as long as $\epsilon$ is sufficiently small,
there is a $c>1$ so that the component $\Sigma_i^0$ of
$B_{\delta/c}(q_\infty)\cap \Sigma_i$ containing $q_i$ has
$\sup_{\Sigma_i^0} |A|^2\leq C$.  Hence there is a uniform
$\rho<\delta$ and $i_0 \geq i_\epsilon$ so that, for $i \geq i_0$,
the component $\Sigma_i^G$ of $B_{\rho}(q_\infty)\cap \Sigma_i$
containing $q_i$ is the graph over $T_{q_i} \Sigma_i$ with small
gradient. By the lamination theorem $\Sigma_i^G$ must actually
converge to a subset of the plane $\set{x_3=x_3(q_\infty)}$.  This
contradicts $\gamma_i$ being a geodesic that converges to
$\mathcal{S}$.

\vskip .05in \noindent Case \eqref{Case2ClosedGeoLem}:

Suppose the $\gamma_i$ are part of a homology basis of the
$\Sigma_i$ and let $q_i \to q_\infty$ represent the lowest point of
the limit interval of the $\gamma_i$.   By relabeling we may take
$p_1=q_\infty$. Pick $r$ such that $r<\frac{1}{2}r_0\leq
\frac{1}{4}\min\set{r_1,\ldots, r_l}$.
Here $r_0$ is given by Lemma
\ref{ClosedGeoLem} and the $r_j$ are given by Proposition
\ref{WeakGenusLossThm}. Let $p_+= \mathcal{S} \cap
\partial B_{r}(q_\infty)$ such that $x_3(p_+)>x_3(q_\infty)$.  Since
$\gamma_i$ is not contained in $B_{r}(q_\infty)$, let $\gamma_i^0$
be the connected component of $\gamma_i \cap B_{r}(q_\infty)$ that
contains $q_i$; for sufficiently large $i$, this intersection is
non-empty. Denote by $q_i^\pm$ the boundary points of $\gamma_i^0$.
Notice that for $i$ large, $\ell(\gamma_i^0) \geq r$.  Moreover, as
$\gamma_i$ is minimizing in its homology class, any curve $\sigma
\subset B_{3r/2}(q_\infty)$ with $\partial \sigma = \{q_i^\pm\}$
such that $\sigma \cup \gamma_i^0$ bounds a 2-cell has $\ell(\sigma)
\geq \ell(\gamma_i^0)$.

Arguing exactly as in the proof of Lemma 2.5 of \cite{BBg1Cpct}, the
points $q_i^+$ and $q_i^-$, connected by $\gamma_i^0$, can be
connected in $\Sigma_i\cap B_{r/2}(p_+)$ by a curve $\sigma_i$ with
$\ell(\sigma_i) \to 0$. This follows from Proposition
\ref{WeakGenusLossThm} since there exists $i'$ large such that, for
all $i \geq i'$, $B_{r/2}(p^+) \cap \cup_{j=1}^l B_{r_j^i}(p_j) =
\emptyset$. Thus, all components of $\Sigma_i \cap B_{r/2}(p_+)$ are
disks.

Now we show that $\sigma_i \cup \gamma_i^0$ is null-homologous, and
thus get a contradiction. First, by the choice of $r$, every
component of $B_{2r}(q_\infty) \cap \Sigma_i$ has connected
boundary. Since $\gamma_i^0, \sigma_i \subset B_{r/2}(p_+)\subset
B_{2r}(q_\infty)$, we let $\Gamma_i$ denote the connected component
of $\Sigma_i \cap B_{2r}(q_\infty)$ that contains $\gamma_i^0$ and
$\sigma_i$. As the genus of $\Gamma_i$ is contained within
$B_{r_i^1} (q_\infty)$ where $r_i^1\to 0$, and $\sigma_i \in
B_{r/2}(p_+)$, we can find a homology basis of $\Gamma_i$ disjoint
from $\sigma_i$. Such a homology basis can also be chosen disjoint
from $\gamma_i$, as $\gamma_i$ was initially part of a homology
basis of $\Sigma_i$ (and belonged to a non-collapsing handle). Thus,
$\gamma_i^0 \cup \sigma_i$ separates $\Gamma_i$ and therefore bounds
a 2-cell. That is, $\gamma_i^0$ is homologous to $\sigma_i$. Thus,
for $i$ sufficiently large, we get a contradiction.
\end{proof}
\section{Proof of Theorem \ref{g2CpctnessThmInt}} \label{G2Sec}

Theorem \ref{GenusLossThm}, in particular the nature
in which handles collapse, immediately gives compactness results for
one-ended embedded minimal surfaces with uniform control on the
inner scale of the topology.  We describe this inner scale
intrinsically (one could also formulate such a control
extrinsically, but this would be more technical). For genus-one
surfaces, control on the inner scale of the genus automatically
implies control on the outer scale (as they are equal); moreover, an
easy argument relates this to intrinsic scales.
In particular,  Theorem \ref{g2CpctnessThmInt} follows immediately
from Theorem \ref{CpctnessCorg1}  for genus-one surfaces.  On the
other hand, when the genus is $\geq 2$, the possibility remains that
the outer scale is unbounded and so Theorem \ref{CpctnessCorg1}
cannot be immediately applied. However, in this case we can use
Theorem \ref{GenusLossThm} to argue inductively.

\begin{proof}
We proceed by induction on the genus.  If $g=1$ then let
$\tilde{\Sigma}_i =r(\Sigma_i)^{-1} (\Sigma_i-x_i)$, where the genus
of $\Sigma_i$ is centered at $x_i \in B_{r(\Sigma_i)}$.  Clearly,
$\tilde{\Sigma}_i$ satisfy the hypotheses of Theorem
\ref{CpctnessCorg1} and so a sub-sequence converges smoothly to some
$\tilde{\Sigma}_\infty \in \mathcal{E}(1,1)$. If $r(\Sigma_i)\to
\infty$, then for $-r(\Sigma_i)^{-1} x_i=y_i \in \tilde{\Sigma}_i$
one has $\inj_{\tilde{\Sigma}_i}(y_i)\to 0$, which contradicts the
convergence. If $r(\Sigma_i)\to 0$, then we claim that
there are points $p_i \in\tilde{\Sigma}_i$ with
$\inj_{\tilde{\Sigma}_i}(p_i)\geq \epsilon r(\Sigma_i)^{-1}$ and
$|p_i|$ uniformly bounded. Let $\tilde{\Sigma}^0_i$ represent the
component of $B_1\cap \tilde{\Sigma}_i$ containing the genus. If
$\mathcal{B}_{\Delta r(\Sigma_i)^{-1}}^{\tilde{\Sigma}_i}(y_i) \cap
\tilde{\Sigma}_i^0 \neq \emptyset$ then the claim is immediate by
hypothesis.  If not, then $\tilde{\Sigma}^0_i$ is a subset of one of
the components of $\tilde{\Sigma}_i\backslash\mathcal{B}_{\Delta
r(\Sigma_i)^{-1}}^{\tilde{\Sigma}_i}(y_i)$. If no such points $p_i$
exist satisfying the uniform lower bound, then for every $R$ there
exists $i_R$ such that, for all $i \geq i_R$, we have $B_R\cap
\mathcal{B}_{\Delta
r(\Sigma_i)^{-1}}^{\tilde{\Sigma}_i}(y_i)=\emptyset$. By Lemma
\ref{LocTopProp}, the geodesic lasso originating at $y_i$ must
surround the component of $B_R \cap \tilde{\Sigma}_i$ containing
$\tilde{\Sigma}_i^0$. The Gauss-Bonnet theorem then uniformly bounds
the total curvature of this element (independent of $R$) --
contradicting the fact that elements of $\mathcal{E}(1,1)$ have
infinite total curvature. Clearly, one cannot have such points $p_i$
as $\tilde{\Sigma}_\infty$ is not a disk. Thus, $r(\Sigma_i)$ is
uniformly bounded away from $0$ and $\infty$. This proves the
theorem when $g=1$.  We now assume that the theorem holds for all
$1\leq g'<g$ and use this to deduce that it also holds for $g$.

We consider three cases: First,  $\infty>\lim_{i\to \infty}
r_+(\Sigma_i)\geq \lim_{i\to \infty} r_-(\Sigma_i)>0$; second,
$\lim_{i \to \infty} r_+(\Sigma_i)=\infty$; third, $\lim_{i\to
\infty} r_+(\Sigma_i)<\infty$ but $\lim_{i\to \infty}
r_-(\Sigma_i)=0$.  In the first the theorem is an immediate
consequence of Theorem \ref{CpctnessCorg1}. In the second case we
let $\tilde{\Sigma}_i=r_+(\Sigma_i)^{-1} \Sigma_i$.  In this case
one has $\inj_{\tilde{\Sigma}_i}(0)\to 0$.  Hence, the curvature is
blowing up and so we may apply Theorem \ref{GenusLossThm}.  Notice
that $0\in \mathcal{S}_{genus}$.   As a consequence, there is a
$\delta>0$ so that the component of $B_{\delta}(0)\cap
\tilde{\Sigma}_i$ containing $0$ lies in $\mathcal{E}(1,g_i,\delta)$
where $g_i<g$. Thus, by passing to a sub-sequence we have that the
component $\Sigma_i'$ of $B_{\delta r_+(\Sigma_i)}\cap \Sigma_i$
that contains $0$ is an element of $ \mathcal{E}(1,g',\delta
r_+(\Sigma_i))$ where $g'<g$.  Clearly, $\Sigma_i'$ satisfies the
inductive hypotheses and so contains a sub-sequence smoothly
converging with multiplicity one to $\Sigma_\infty'\in
\mathcal{E}(1,g'{}')$ with $g'{}'\leq g'$.  Finally, notice that
$\Sigma'_\infty$ is properly embedded and the $\Sigma'_i$ converge
to $ \Sigma_\infty'$ with multiplicity one. Moreover, there is no
complete properly embedded minimal surface in
$\mathbb{R}^3\backslash \Sigma_\infty$.  Thus, for any fixed $R>0$,
and for $i$ sufficiently large, depending on $R$, $\Sigma_i\cap B_R=
\Sigma_i'\cap B_R$, and so $\Sigma_i$ converges to $\Sigma_\infty'$,
which proves the theorem.

In the third case we note that the curvature must be blowing up, as
otherwise $r_-(\Sigma_i)$ would be uniformly bounded below, and so
Theorem \ref{GenusLossThm} can be applied to the $\Sigma_i$. Indeed,
Proposition \ref{GenusLossCor} gives uniformly collapsing
$k_j$-handles $\Gamma_i^{j,k_j}$, collapsing at rate $\lambda_i^j$,
with $k_1+\ldots +k_l=g$. Arguing as above, there must be points
$p_i\in \mathcal{B}^{\Sigma_i}_\Delta(0)$ with
$\inj_{\Sigma_i}(p_i)\geq \epsilon$ but $\dist_{\Sigma_i}(p_i,
\Gamma_i^{j,k_j})\leq C\lambda_i^j$ for some $j$.  As before, by a
rescaling argument this gives an immediate contradiction.
\end{proof}

\bibliographystyle{amsplain}
\bibliography{thesisbib}

\end{document}